\newtheorem{theorem}{Theorem}[section]
\newtheorem{lemma}[theorem]{Lemma}
\newtheorem{proposition}[theorem]{Proposition}
\theoremstyle{definition}
\newtheorem{definition}[theorem]{Definition}
\newtheorem{assumption}[theorem]{Assumption}
\newtheorem{construction}[theorem]{Construction}
\theoremstyle{remark}
\newtheorem{remark}[theorem]{Remark}
\numberwithin{equation}{section}
\newcommand{\diag}{\operatorname{diag}}
\newcommand\HH{\mathrm{H}}
\newcommand{\Pic}{\operatorname{Pic}}
\newcommand{\ad}{\operatorname{ad}}
\newcommand{\pr}{\operatorname{pr}}
\newcommand{\Nm}{\operatorname{Nm}}
\newcommand{\Prym}{\operatorname{Prym}}
\newcommand{\rank}{\operatorname{rank}}
\newcommand{\Hom}{\operatorname{Hom}}
\newcommand{\Sym}{\operatorname{Sym}}
\newcommand{\ord}{\operatorname{ord}}
\newcommand{\Tr}{\operatorname{Tr}}
\newcommand{\Jac}{\operatorname{Jac}}
\newcommand{\CC}{\mathbb{C}}
\newcommand{\PP}{\mathbb{P}}
\newcommand{\ZZ}{\mathbb{Z}}
\newcommand{\id}{\operatorname{id}}
\newcommand{\Ann}{\operatorname{Ann}}
\let\varphi\phi
\let\bar\overline
\let\tilde\widetilde
\let\hat\widehat
\DeclareSymbolFont{script}{U}{eus}{m}{n}
\DeclareMathSymbol{\Wedge}{0}{script}{"5E}
\newcommand{\GL}{\mathrm{GL}}
\newcommand{\Sp}{\mathrm{Sp}}
\newcommand{\SO}{\mathrm{SO}}
\begin{document}

\title{Spectral Data of Special Orthogonal Higgs bundles and Hecke Modification}

\author{Tyson Klingner}
\address{University of Washington, Department of Mathematics, Seattle, WA 98195, USA}
\curraddr{}
\email{tysonk4@uw.edu}
\thanks{}

\subjclass[2020]{14H60, 14C20, 14H40}

\keywords{Higgs bundles, Spectral data, Hecke modification}

\begin{abstract}
We give a complete, self-contained computation of the spectral data parametrising Higgs bundles in the generic fibres of the $\SO_{2n+1}$-Hitchin fibration where the Higgs fields are $L$-twisted endomorphisms. Although the spectral data is known in the  literature, we develop a new approach, which takes advantage of Hecke modification. Further, we present the computation for the $\Sp_{2n}$ and $\SO_{2n}$ cases while clarifying some aspects of the correspondence which are not well explained in the pre-existing literature. We also compute the number of connected components of the generic fibres, and demonstrate Langlands duality in the fibres via the canonical duality in the fibres.
\end{abstract}

\maketitle

\section{Introduction}\label{sec1}

Let $X$ be a compact Riemann surface of genus at least $2$. The goal of this paper is to completely describe the generic fibres of the $\SO_{m}$-Hitchin fibration 
\[
h : \mathcal{M}({\SO_{m}, L}) \to B := 
\begin{cases}
\bigoplus_{i=1}^{n-1} \HH^0(X, L^{2i}) \oplus \HH^0(X, L^n)  & \text{if $m=2n$} \\
\bigoplus_{i=1}^{n} \HH^0(X, L^{2i}) & \text{if $m=2n+1$.}
\end{cases}
\] 
Here, $\mathcal{M}({\SO_{m}, L})$ denotes the moduli space of $L$-twisted $\SO_{m}$-Higgs bundles over $X$ \cite[Theorem 5.10]{MR1085642}, and $\pi : L \to X$ is a non-trivial basepoint-free holomorphic line bundle. The Hitchin fibration defines a proper, surjective holomorphic mapping \cite[Theorem 6.1]{MR1085642} (cf. \cite{MR0887284, MR0885778} for $L=K_X$). Hitchin demonstrated in \cite[Sec. 5.17]{MR0885778} that when $L=K_X$ the Hitchin fibration endows $\mathcal{M}(\SO_{m}, K_X)$ with the structure of  an algebraically complete integrable system with respect to the canonical symplectic structure. Consequently, the generic fibres are torsors over abelian varieties. For $L \ne K_X$ the moduli space $\mathcal{M}(\SO_{2n+1}, L)$ does not have a symplectic structure and we cannot view the space as an integrable system. However, we show by direct computation that the generic fibres of $h : \mathcal{M}(\SO_{m}, L) \to B$ are still torsors of abelian varieties, which we describe by {\em spectral data.}

\bigskip

More specifically, a generic point $a \in B$ naturally defines a complex analytic subvariety $S \subset \operatorname{Tot}(L)$ called a {\em spectral curve} (see Section \ref{sec2}). When $m = 2n+1$ the spectral curve $S$ associated to $a$ is smooth (see Sections \ref{secsp2n} and \ref{secso2n+1}). However, when $m=2n$, the spectral curve $S$ is singular whose singularities are ordinary double points (see Section \ref{sec5}). In either case there is a canonical involution $\sigma : S \to S$ and for $m$ odd $\bar{S} := S/\sigma$ defines a compact Riemann surface. For the even case we consider the normalisation $\hat{S}$ of $S$ and $C := \hat{S}/\hat{\sigma}$ where $\hat{\sigma}$ is a lift of the involution.  

\bigskip

In \cite[Theorem 1]{MR2354922} and \cite[Sec. 5.14]{ MR0885778}  for $L = K_X$ Hitchin computed via spectral data that each connected component of $h^{-1}(a)$ is a torsor of the Prym varieties $\Prym(S, \bar{S})/p^*\Jac(\bar{S})[2]$ for $m=2n+1$, and $\Prym(\hat{S}, C)$ for $m = 2n$ respectively. Here, $p^*\Jac(\bar{S})[2]$ denotes the canonical action by 2-torsion bundles where $p : S \to \bar{S}$ denotes projection. See Section \ref{prymvarieties} for more about Prym varieties. Hitchin's arguments used in the odd case do not generalise to the case $L \ne K_X$. However, the analogous result does indeed hold.  

\begin{theorem}[Spectral Data] \label{main theorem} Let $a \in B$ be a generic point. Then the fibre of the $\SO_{m}$-Hitchin fibration $h^{-1}(a)$ consists of two connected components each of which defines a torsor of the Prym variety:
\begin{equation}
 \begin{cases}
\Prym(S, \bar{S})/p^*\Jac(\bar{S})[2] & \text{if $m = 2n+1$} \\
\Prym(S, \bar{S}) & \text{if $m=2n$}.

\end{cases}
\end{equation}

\end{theorem}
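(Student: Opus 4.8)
The plan is to set up a spectral correspondence identifying $h^{-1}(a)$ with a moduli set of line bundles (for $m$ odd) or torsion-free rank-one sheaves (for $m$ even) on the spectral curve $S$ equipped with a symmetry compatible with $\sigma$, and then to identify that set, component by component, with a torsor over the stated Prym variety. Genericity of $a$ is used throughout: it guarantees that $S$ is smooth ($m$ odd) or has only ordinary double points ($m$ even), that every Higgs bundle in $h^{-1}(a)$ is stable, and that the Beauville--Narasimhan--Ramanan (BNR) correspondence is available on the underlying $\GL_m$-level, and the hypotheses on $L$ enter at exactly this point. By BNR a $\GL_m$-Higgs bundle with spectral curve containing $S$ is $\pi_*M$ for such an $M$, with $\Phi$ induced by the tautological section of $\pi^*L$; an $\SO_m$ (resp.\ $\Sp_{2n}$) structure is a nondegenerate symmetric (resp.\ alternating) isomorphism $q\colon E\to E^\vee$ with $q\Phi+\Phi^\vee q=0$, together with the determinant and Stiefel--Whitney constraints. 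Using Grothendieck duality $(\pi_*M)^\vee\cong\pi_*(M^\vee\otimes K_{S/X})$ with $K_{S/X}\cong\pi^*L^{\,m-1}$ (read off from $\cO_{\operatorname{Tot}(L)}(S)\cong\pi^*L^{\,m}$) and the fact that the eigen-data of $(E^\vee,-\Phi^\vee)$ over $S$ is the $\sigma$-pullback of that of $(E,\Phi)$, the pairing $q$ translates into an isomorphism $\sigma^*M\otimes M\cong\pi^*\mathcal N$ for an explicit line bundle $\mathcal N$ on $X$ (a twist of $L^{\,m-1}$, involving $K_X$ in Hitchin's original setting), with the (anti)symmetry of $q$ becoming a compatibility of that isomorphism with $\sigma$. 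So the first task is to prove that $h^{-1}(a)$ is in bijection with the set $\mathcal P(a)$ of $M$ on $S$ solving $\sigma^*M\otimes M\cong\pi^*\mathcal N$ with the correct discrete invariants, up to the residual ambiguity of the correspondence.

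The equation $\sigma^*M\otimes M\cong\pi^*\mathcal N$ pins down $\deg M$, and --- since for a general twist $L\ne K_X$ there is no canonical square root of $L$ available for a direct construction --- exhibiting a solution $M_0$ with the prescribed determinant and Stiefel--Whitney class is carried out by Hecke modification: one starts from a natural but not-quite-correct sheaf, performs $\sigma$-equivariant Hecke modifications supported along the ramification divisor of $\pi$ (resp.\ at the nodes of $S$ in the even case), and tracks how the pairing $q$ transforms at each step. Given such an $M_0$, an arbitrary solution is $M_0\otimes N$ with $N\in\Pic^0(S)$ and $\sigma^*N\otimes N\cong\cO_S$, i.e.\ $N\in\ker(1+\sigma^*)$. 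Using $p^*\circ\Nm_{S/\bar S}=1+\sigma^*$, $\Nm_{S/\bar S}\circ p^*=2$, and the ramification data of $p\colon S\to\bar S$, one identifies $\ker(1+\sigma^*)$ --- modulo the residual $p^*\Jac(\bar S)[2]$-ambiguity, which in the odd case comes from the zero-eigenvalue summand of $E$ --- with $\Prym(S,\bar S)/p^*\Jac(\bar S)[2]$ for $m$ odd and with $\Prym(S,\bar S):=\Prym(\hat S,C)$ for $m$ even. The determinant condition turns out to be automatic on this locus, so $\mathcal P(a)$ is a disjoint union of torsors over the asserted Prym variety.

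The remaining surviving discrete invariant is $\ZZ/2$-valued: the second Stiefel--Whitney (Arf) class $w_2\in\HH^2(X,\ZZ/2)\cong\ZZ/2$ of the $\SO_m$-bundle, matching $\pi_0\bigl(\mathcal M(\SO_m,L)\bigr)=\pi_1(\SO_m)=\ZZ/2$, and visible under the correspondence as the parity of a theta-characteristic-type line bundle assembled from $M_0$ and the branch divisor. I would then show that both values are realised --- toggled by a single Hecke modification --- and that for each fixed value $\mathcal P(a)$ is a single coset of the connected Prym, hence connected. Therefore $h^{-1}(a)$ has precisely two connected components, each a torsor of the Prym variety asserted in the theorem.

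The main obstacle lies in the last two steps: making the Hecke modifications along the ramification and branch loci compatible with the symmetric (resp.\ alternating) pairing, and computing their effect on the determinant and on $w_2$ \emph{precisely} --- this is exactly where Hitchin's argument, which relies on the square root $K_X^{1/2}$, fails for $L\ne K_X$, and where the new Hecke-theoretic method does the decisive work. In the even case there is the further difficulty of simultaneously controlling the nodes of $S$, the normalisation $\hat S\to S$, the induced double cover $C$, and the direct-image sheaf (torsion-free but not locally free at the nodes), and of checking that $\Prym(\hat S,C)$ is the intrinsically correct object and that the component count is still exactly two.
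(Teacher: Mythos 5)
Your overall architecture --- a spectral correspondence, a duality isomorphism $\theta$ on the spectral side, Hecke modification at the bad locus, identification with a Prym torsor, and a $\ZZ/2$ count of components --- matches the paper's, but your set-up of the odd case contains a structural error that blocks the rest of the plan. For $m=2n+1$ the characteristic polynomial is $\lambda\,p(\lambda)$, so the spectral curve of the rank-$(2n+1)$ bundle $V$ is \emph{reducible}: the zero section union the smooth degree-$2n$ curve $S$ defined by $p(\lambda)$, meeting at the zeros of $a_{2n}$. In particular $V$ is not $\pi_*M$ for a rank-one object on a smooth curve of the right degree, and plain BNR is unavailable; your sentence ``a $\GL_m$-Higgs bundle with spectral curve containing $S$ is $\pi_*M$'' cannot be made to work as stated. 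The paper's resolution, which is where the actual content lies, is to split off the zero eigenbundle $V_0\cong L^{-n}$, pass to the $L$-valued symplectic bundle $(E,\Phi,\omega)$ with smooth spectral curve $S$, and reconstruct $V$ from $L^{-1}E\oplus L^{-n}$ by a Hecke modification allowing simple poles along $D=(a_{2n})$ (not along the full ramification divisor) with residues on a line determined by an injection $i_x:L_x^{-n}\to\ker(\Phi_x)$ normalised by $\omega_x(i_xw,\Phi_x'i_xw)=-w^2a_{2n}'(0)$. The extra spectral datum is precisely a square root $\{j_x\}$ of $\theta$ at the fixed points of $\sigma$, taken modulo one global sign, and it is this finite datum --- not an abstract ``residual ambiguity'' --- that simultaneously produces the quotient by $p^*\Jac(\bar S)[2]$ (via injectivity of the induced squaring map into the enlarged torsor) and the count of exactly two components (by comparing the degree $2^{2n\deg L-2}$ of that squaring map with the degree $2^{2n\deg L-1}$ of the forgetful map to the $\Sp_{2n}$ data). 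Your two load-bearing assertions --- that the $p^*\Jac(\bar S)[2]$-ambiguity ``comes from the zero-eigenvalue summand'' and that the two components are distinguished by $w_2$ and ``toggled by a single Hecke modification'' --- are essentially restatements of the theorem rather than steps toward it.

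Two further points would need closing. In the even case, working with torsion-free rank-one sheaves on the nodal $S$ forces you to decide which local type occurs at each node; the paper shows (using that the only nilpotent element of $\mathfrak{so}_2$ is $0$, so $\Phi_x=0$ at every node) that the sheaf is never locally free there, i.e.\ it is always the pushforward of a line bundle from the full normalisation $\hat S$ --- without this the parameter space would be a compactified object rather than a torsor of $\ker(\Nm_{\hat S/C})$, whose two components come from $\pr:\hat S\to C$ being unramified. And the triviality of $\det E$ is not ``automatic'': the paper proves it via the determinant-of-pushforward formula, $q_*\mathcal{O}_C\cong\mathcal{O}_X\oplus L^{-2}\oplus\cdots\oplus L^{-(2n-2)}$, and a Smith-normal-form computation of $\det(\pi_*\mathcal{O}_S)\to\det(\hat\pi_*\mathcal{O}_{\hat S})$ across the nodes.
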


To compute the spectral data, we give a new approach in the $m=2n+1$ case in Section \ref{secso2n+1}, which takes advantage of {\em Hecke modifications} of orthogonal bundles (c.f. \cite{MR3049307}).  Suppose $(V, \phi, Q)$ is a generic $\SO_{2n+1}$-Higgs bundle. The associated spectral curve is always singular. Away from the singularities we compute the spectral data following Hitchin's method in reducing to a smooth $\Sp_{2n}$ spectral curve $(S, \sigma)$ associated to a $\Sp_{2n}$-Higgs bundle $(E, \Phi, \omega)$. However, contrary to Hitchin we reduce to a $L$-valued symplectic form (Construction \ref{firstone}). The spectral data associated to the $\Sp_{2n}$-Higgs bundle is a holomorphic line bundle $N \in \Jac(\bar{S})$ along with an isomorphism $\theta : \sigma(N) \to N^* \otimes \pi^*(L^{2n})$, see Section \ref{SO_{2n+1} spec data}. To give the spectral data for the $\SO_{2n+1}$-Higgs bundle we reconstruct $(V, \phi, Q)$ from $(E, \Phi, \omega)$ by identifying $\mathcal{O}_X(V)$ to a subsheaf of  $\mathcal{O}_X(L^{-1}E \oplus L^{-n}) \otimes_{\mathcal{O}_X} \mathcal{M}_X$, and applying a Hecke transformation at the image of each singularity to recover the Higgs field $\Phi$ and the orthogonal structure $Q$. The $\Sp_{2n}$ spectral data $(N, \theta)$ and Hecke transformations completely describe the spectral data. This use of Hecke transformations provides the full generalisation to the arbitrary $L$ case. 

\bigskip

In Sections \ref{secsp2n} and \ref{sec5} we compute the spectral data of the $\Sp_{2n}$ and $\SO_{2n}$ cases respectively. Although the spectral data is known in each case where Hitchin's arguments do generalise to the arbitrary $L$ case, we clarify some aspects of the correspondence, which are not well explained in the pre-existing literature. For the example, the $\SO_{2n}$ computation relies on the fact that the BNR-correspondence (Theorem \ref{BNR}) extends to the normalisation $\hat{S}$ of the singular $\SO_{2n}$ spectral curve $S$. In Section \ref{SO_{2n} BNR-correspondence} we give a complete treatment using the trace map to show that this is the case. Moreover, in Section \ref{det E is trivial} we give the full computation that the $\mathrm{O}_{2n}$-Higgs bundle constructued from the spectral data does have trivial determinant along with a unique orientation making the $\mathrm{O}_{2n}$-Higgs bundle a $\SO_{2n}$-Higgs bundle establishing the correspondence.   

\section*{Acknowledgments}

The author thanks his Masters supervisor Dr. David Baraglia at the University of Adelaide for unwavering support and help. The author is thankful for the referee's careful reading and feedback. 

\section{Higgs Bundles and Spectral Curves}\label{sec2}

Throughout the article $X$ denotes a compact Riemann surface with genus $g \ge 2$ and $\pi: L\to X$ denotes a fixed basepoint-free holomorphic line bundle with positive degree. We begin by recalling the definitions of Higgs bundles and spectral curves which lay the foundations for the results. In what proceeds $G$ denotes an arbitrary complex reductive Lie group and $\mathfrak{g}$ denotes the associated Lie algebra.

\begin{definition}
A {\em $G$-Higgs bundle} is a pair $(P, \Phi)$ where $P$ is a holomorphic principal $G$-bundle over $X$ and $\Phi$ is a holomorphic section of $\ad(P) \otimes L$. Here $\ad(P)$ denotes the adjoint bundle associated to $P$. The homomorphism $\Phi$ is called the {\em Higgs field}.
\end{definition}
\begin{remark}
Rank $n$ holomorphic vector bundles are canonically associated to principal $\GL_n$-bundles via the frame bundle. Under this identification, the adjoint bundle of a principal $\GL_n$-bundle is canonically isomorphic to the endomorphism bundle of the corresponding holomorphic vector bundle. Hence, a $\GL_n$-Higgs bundle is a pair $(E, \Phi)$ where $E \to X$ is a rank $n$ holomorphic vector bundle and $\Phi : E \to L \otimes E$ is a homomorphism.
\end{remark}

Classically Higgs bundles are defined for $L = K_X$. However, the generalisation of Higgs bundles to non-compact surfaces has been established in the literature, e.g., \cite{MR1040197, MR4027558, boalch2012}. The generalised objects are {\em parabolic Higgs bundles} \cite[Def. 5.3]{MR1353317}, for which the Higgs field $\Phi$ takes values in a non-trivial basepoint-free holomorphic line bundle $L$ over $X$, which may not be $K_X$. Hence, in this paper we follow the modern convention of using arbitrary $L$. More recently, Higgs bundles have been generalised to, and studied over, higher dimensional varieties over an arbitrary algebraically closed field $k$ of characteristic zero. For more details, consult \cite{MR4686658, MR4118645, chen2018}.

\bigskip

Higgs bundles with structure group $G \subseteq \GL_n$ have become of great interest in understanding representations of the fundamental group $\pi_1(X)$ under the {\em nonabelian Hodge correspondence} established by Corlette and Simpson. For more details see \cite{MR1179076, MR1307297, MR1320603, MR0965220}. If $G \subseteq \GL_n$, a $G$-Higgs bundle naturally defines a pair $(E, \Phi)$ as per the remark. By pulling back $L$ over itself we obtain the tautological section $\lambda \in \HH^0(L, \pi^* L)$. The {\em spectral curve} associated to $(E, \Phi)$ denoted $S$ is the complex analytic subvariety of $\operatorname{Tot}(L)$ defined by the zero locus of 
\begin{equation}
\label{charac}
p(\lambda) := \det(\lambda - \pi^*(\Phi)) = \lambda^n + \pi^*(a_1) \lambda^{n-1} + \cdots + \pi^*(a_n)
\end{equation}
See \cite[Sec. 3]{MR998478} for a detailed account of the construction of a general spectral curve $S$. By abuse of notation we will omit the pullbacks when denoting the characteristic coefficients in the characteristic polynomial. 

\bigskip

If $G = \GL_n$, then the fact that $L$ is basepoint-free allows us to assume that a given spectral curve $\pi : S \to X$ is smooth, see \cite[Sec. 5.1]{MR0885778}. 
Now, by the projection formula pushing forward a line bundle $N \in \Pic(S)$ and the tautological section $\lambda : N \to \pi^*(L) \otimes N$ defines a $\GL_n$-Higgs bundle $(\pi_*(N), \pi_*(\lambda))$ whose spectral curve is $S.$ \footnote{\, Generically $\pi_*(\lambda)$ has $n$-distinct eigenvalues and $\pi_*(N)$ has rank $n$, thus the minimal polynomial and characteristic polynomial for $\pi_*(\lambda)$ agree. By the Cayley Hamilton theorem $p(\pi_*\lambda) = 0$, and since $p$ has degree $n$ it follows that $p(\lambda)$ is the characteristic polynomial for $\pi_*\lambda.$} Hence, smooth spectral curves are always associated to $\GL_n$-Higgs bundles. 

\bigskip

The trivial holomorphic line bundle $\mathcal{O}_S$ on a smooth spectral curve $S$ is of great importance to the topic. In \cite[pp. 173]{MR998478} it was shown by algebraic means that $\pi_*\mathcal{O}_S \cong \mathcal{O}_X \oplus L^{-1} \oplus \cdots \oplus L^{-(n-1)}$ where $n$ is the degree of the spectral curve, and consequently, $\pi:S \to X$ is finite. Here, we provide an alternative complex analytic proof.

\begin{proposition}
\label{structuresheaf}
The map
\[
\mathcal{O}_X \oplus L^{-1} \oplus \cdots \oplus L^{-(n-1)}  \to \pi_*\mathcal{O}_S
\]
defined by
\[
(c_0, c_1, \ldots, c_{n-1}) \mapsto \pi^*(c_0) + \pi^*(c_1)\lambda + \cdots + \pi^*(c_{n-1})\lambda^{n-1}
\]
defines an isomorphism. 
\end{proposition}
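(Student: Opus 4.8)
The plan is to prove this as a statement about coherent sheaves, checking the claimed isomorphism locally over $X$ and then recognising that the local model is governed by the characteristic polynomial. Since $\pi : S \to X$ is finite (as asserted, or as will follow once we know $\pi_*\cO_S$ is locally free of rank $n$), the question is local on $X$: it suffices to work over a small open (or affine) $U \subseteq X$ on which $L$ is trivialised, say by a fibre coordinate $\lambda$, so that $\operatorname{Tot}(L)|_U \cong U \times \CC$ and $S|_U$ is cut out by the single equation $p(\lambda) = \lambda^n + a_1 \lambda^{n-1} + \cdots + a_n = 0$ with $a_i \in \cO_X(U)$. Over such a $U$ the map in the statement becomes $\bigoplus_{i=0}^{n-1} \cO_X(U) \to \cO_S(S|_U) = \cO_X(U)[\lambda]/(p(\lambda))$ sending $(c_0,\dots,c_{n-1}) \mapsto \sum c_i \lambda^i$, and this is the standard fact that $\cO_X(U)[\lambda]/(p(\lambda))$ is free as an $\cO_X(U)$-module with basis $1, \lambda, \dots, \lambda^{n-1}$ because $p$ is monic of degree $n$ (Euclidean division by a monic polynomial). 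Thus locally the map is an isomorphism.

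\medskip

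The remaining work is to \emph{globalise}: the local isomorphisms must be shown to patch to the claimed global map, which requires keeping track of how $\lambda$ transforms under a change of trivialisation of $L$. On an overlap $U_{\alpha\beta}$ with transition function $g_{\alpha\beta} \in \cO_X^\times(U_{\alpha\beta})$ for $L$, the fibre coordinate transforms as $\lambda_\alpha = g_{\alpha\beta}\lambda_\beta$, so $\lambda^i$ transforms by $g_{\alpha\beta}^i$; hence the monomial $\lambda^i$, viewed as a section of $\pi^*(L^i)$ on $S$, pairs correctly with the factor $L^{-i}$ in the source. This is exactly why the source of the map is $\bigoplus_{i=0}^{n-1} L^{-i}$ rather than $\cO_X^{\oplus n}$: multiplication by $\lambda^i$ defines a well-defined $\cO_X$-linear map $L^{-i} \to \pi_*\cO_S$, and summing these over $i = 0, \dots, n-1$ gives the global morphism in the statement. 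Since we have checked it is an isomorphism on each $U_\alpha$, it is an isomorphism of sheaves.

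\medskip

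Two points deserve care. First, one should note that the statement presupposes $S$ is a smooth spectral curve, so that $\cO_S$ is the genuine structure sheaf of a reduced complex manifold and $\pi$ is finite flat; the local model $\cO_X(U)[\lambda]/(p(\lambda))$ is automatically what $\pi_*\cO_S$ looks like precisely because $S$ is the (scheme-theoretic, or analytic) zero divisor of $p(\lambda)$ in $\operatorname{Tot}(L)$ — a codimension-one subvariety defined by one equation, so its local ring is the quotient by that equation. Second, in the complex-analytic category one should phrase ``local on $X$'' using Stein or small polydisc neighbourhoods and invoke that a morphism of coherent analytic sheaves which is an isomorphism on stalks (equivalently, on a cover) is an isomorphism; the Weierstrass-type division of a convergent power series in $\lambda$ by the monic polynomial $p(\lambda)$ supplies the freeness at the analytic-local level just as polynomial division does algebraically.

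\medskip

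\noindent\textbf{Main obstacle.} The mathematical content — freeness of $\cO_X(U)[\lambda]/(p(\lambda))$ via division by a monic polynomial — is entirely routine. The only genuine subtlety is bookkeeping: making explicit that $\lambda^i$ is a section of $\pi^*(L^i)$ (not of $\cO_S$), so that the $i$-th summand of the source must be twisted by $L^{-i}$ for the map to be globally well defined, and then confirming that the pieced-together local isomorphisms agree on overlaps. I expect this patching/twisting verification to be the part that needs to be written carefully, while everything else is immediate.
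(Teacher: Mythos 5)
Your argument is correct, but it is a genuinely different route from the paper's. You localise over a trivialising open set, identify $\pi_*\mathcal{O}_S$ there with $\mathcal{O}_X(U)[\lambda]/(p(\lambda))$, and invoke division by the monic polynomial $p$ (Weierstrass division at the stalk level in the analytic category) to get freeness with basis $1,\lambda,\dots,\lambda^{n-1}$, then globalise by observing that $\lambda^i$ is a section of $\pi^*(L^i)$ on $S$, which is exactly why the $i$-th summand of the source is $L^{-i}$. This is in essence the original algebraic proof of Beauville--Narasimhan--Ramanan transported to the analytic setting, and the paper explicitly sets out to give an alternative to it: its proof is global, passing to the projectivisation $\bar{Y}=\PP(\mathcal{O}_X\oplus L)$, realising $S$ as the zero locus of a section of $L^n\otimes\mathcal{O}_{\bar{Y}}(n)$, pushing forward the resulting short exact sequence, and killing the outer terms via the projection formula, $\HH^1(\PP^1,\mathcal{O}_{\PP^1}(-1))=0$ and Grauert base change, so that $\pi_*\mathcal{O}_S\cong\bar{\pi}_*\mathcal{O}_{\bar{Y}}(n-1)\cong\mathcal{O}_X\oplus L^{-1}\oplus\cdots\oplus L^{-(n-1)}$ in one stroke. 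Your approach buys elementarity and robustness (it needs no smoothness of $S$ -- the division argument applies verbatim to $\mathcal{O}_Y/(p)$ for singular or non-reduced spectral curves, which is relevant since the paper later needs a variant for the singular $\SO_{2n}$ curve), at the cost of the stalkwise Weierstrass-division and patching bookkeeping you rightly flag as the delicate part; the paper's approach avoids all local analysis and transition-function checks, but leans on direct-image machinery and the known structure of $\bar{\pi}_*\mathcal{O}_{\bar{Y}}(n-1)$. One small remark: your parenthetical that the statement presupposes smoothness of $S$ is not actually needed for this proposition, only for the later BNR correspondence.
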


\begin{proof}
To see that $\pi^*(c_0) + \pi^*(c_1)\lambda + \cdots + \pi^*(c_{n-1})\lambda^{n-1}$ defines a section of $\pi_*\mathcal{O}_S$, suppose that $U \subseteq X$ is open. By definition, $\pi^*(c_i) \in \mathcal{O}_S(\pi^*(L^{-i}))(\pi^{-1}(U))$ and $\lambda^i \in \mathcal{O}_S(\pi^*(L^i))(\pi^{-1}(U))$, hence $\pi^*(c_i)\lambda^i \in \mathcal{O}_S(\pi^{-1}(U)) = (\pi_* \mathcal{O}_S)(U)$ for $i = 1, \ldots, n.$ Summing over the index shows that $\pi^*(c_0) + \pi^*(c_1)\lambda + \cdots + \pi^*(c_{n-1})\lambda^{n-1}$ defines a section of $\pi_*\mathcal{O}_S$.  

Fix $Y = \operatorname{Tot}(L)$. Let $\bar{Y} = \PP(\mathcal{O}_X \oplus L)$ denote the projectivisation of $L$ with projection map $\bar{\pi} : \bar{Y} \to X$, and let $\mathcal{O}_{\bar{Y}}(1)$ denote the hyperplane bundle.  The complement $Y_\infty := \bar{Y} - Y$ defines a divisor on $\bar{Y}$ whose associated line bundle is $\mathcal{O}_{\bar{Y}}(1).$ Hence, we may choose a section $s$ of $\mathcal{O}_{\bar{Y}}(1)$ whose zero locus is $Y_\infty.$ The global sections of $\mathcal{O}_{\bar{Y}}(1)$ are canonically identified to global sections of $\Hom(\mathcal{O}_X \oplus L, \mathcal{O}_X),$ and hence, we can take $s$ to be the section identified to the projection homomorphism $\pr : \mathcal{O}_X \oplus L \to \mathcal{O}_X.$ Thus, $s$ is non-vanishing on $Y$, and there is a unique trivialisation $\mathcal{O}_{\bar{Y}}(1)\vert_Y \, \cong \mathcal{O}_Y$ that sends $s\vert_Y$ to 1. Now, let $\bar{\lambda}$ be the global section of $L \otimes \mathcal{O}_{\bar{Y}}(1)$ that corresponds to $\pr : \mathcal{O}_X \oplus L \to L$. Under the previously mentioned trivialisation $\mathcal{O}_{\bar{Y}}(1)\vert_Y \, \cong \mathcal{O}_Y$ the section $\bar{\lambda}$ is sent to the tautological section $\lambda$ of $L$. Next, consider the section $\bar{p}(\bar{\lambda}) \in \HH^0(\bar{Y}, L^n \otimes \mathcal{O}_{\bar{Y}}(n))$ defined by 
\[
\bar{p}(\bar{\lambda}) = \bar{\lambda}\,^n + \pi^*(a_1)\bar{\lambda}\,^{n-1}s + \cdots + \pi^*(a_n)s^n.
\]
We claim that $S$ is the zero locus of $\bar{p}(\bar{\lambda}).$ Since $\bar{p}(\bar{\lambda}) \vert_Y \, = p(\lambda)$ under the trivialisation it suffices to show that $\bar{p}(\bar{\lambda})$ is non-zero on $Y_\infty.$ However, $s \vert_{Y_\infty} = 0$, and thus, $\bar{p}(\bar{\lambda}) \vert_{Y_\infty} \, = \bar{\lambda}\,^n \vert_{Y_\infty},$ which is non-zero. Thus, since $S$ is the zero locus of $\bar{p}(\bar{\lambda})$ there is a short exact sequence of sheaves
\begin{equation}
\label{sheavessequence}
0 \to \mathcal{O}_{\bar{Y}}(L^{-n}(-1)) \xrightarrow{\bar{p}(\bar{\lambda})} \mathcal{O}_{\bar{Y}}(n-1) \to \mathcal{O}_{S}(n-1) \to 0.
\end{equation}
From the trivialisation $\mathcal{O}_{\bar{Y}}(1)\vert_{Y} \, \cong \mathcal{O}_Y$ notice that $\mathcal{O}_S(n-1) \cong \mathcal{O}_S$, and applying the direct image functor $\bar{\pi}_*$ to (\ref{sheavessequence}), recalling $\bar{\pi} \vert_{Y} = \pi$, gives the exact sequence
\begin{equation}
\label{sheaveslongsequence}
0 \to \bar{\pi}_*\mathcal{O}_{\bar{Y}}(L^{-n}(-1)) \to \bar{\pi}_*\mathcal{O}_{\bar{Y}}(n-1) \to \pi_*\mathcal{O}_{S} \to R^1 \bar{\pi}_*\mathcal{O}_{\bar{Y}}(L^{-n}(-1))  \to \cdots
\end{equation}
By the projection formula, $\bar{\pi}_*\mathcal{O}_{\bar{Y}}(L^{-n}(-1)) \cong L^{-n} \otimes \bar{\pi}_*\mathcal{O}_{\bar{Y}}(-1) \cong 0.$ Next, $R^1\bar{\pi}_*\mathcal{O}_{\bar{Y}}(L^{-n}(-1))$ is the sheaf associated to the presheaf $U \mapsto \HH^1(\bar{\pi}^{-1}(U), \mathcal{O}_{\bar{Y}}(L^{-n}(-1)))$, and for each $x \in X$, notice that $\bar{\pi}\,^{-1}(x) \cong \PP^1$ and $L \vert_{\bar{\pi}\,^{-1}(x)} \, \cong \mathcal{O}_{\PP^1}$. Hence,
\[
\HH^1(\bar{\pi}\,^{-1}(x), \mathcal{O}_{\bar{Y}}(L^{-n}(-1))) \cong \HH^1(\PP^1, \mathcal{O}_{\PP^1}(-1)).
\]
By Serre duality $\HH^1(\PP^1, \mathcal{O}_{\PP^1}(-1)) = 0$, then by Grauert's base change theorem \cite[Theorem 8.5 (iv)]{MR0749574} we see \[R^1\pi_*\mathcal{O}_{\bar{Y}}(L^{-n}(-1)) = 0.\] Thus, the exact sequence in (\ref{sheaveslongsequence}) descends to an isomorphism $\pi_*\mathcal{O}_{\bar{Y}}(n-1) \to \pi_*\mathcal{O}_S.$ Finally, $\mathcal{O}_X \oplus L^{-1} \oplus \cdots \oplus L^{-(n-1)} \cong \pi_*\mathcal{O}_{\bar{Y}}(n-1)$ with explicit isomorphism
\begin{equation}
\label{giveniso}
(c_0, c_1, \ldots, c_{n-1}) \mapsto \pi^*(c_0)s^n + \pi^*(c_1)s^{n-1}\bar{\lambda} + \cdots + \pi^*(c_{n-1}) \bar{\lambda}\,^{n-1}.
\end{equation}
Applying the trivialisation $\mathcal{O}_{\bar{Y}}(1) \vert_Y \, \cong \mathcal{O}_Y$ then defines an isomorphism \[\mathcal{O}_X \oplus L^{-1} \oplus \cdots \oplus L^{-(n-1)} \to \pi_*\mathcal{O}_{S}\] given by
\[
(c_0, c_1, \ldots, c_{n-1}) \mapsto \pi^*(c_0) + \pi^*(c_1)\lambda + \cdots + \pi^*(c_{n-1}) \lambda^{n-1}.
\]
\end{proof}

\begin{remark}
In \cite[Sec. 3]{MR998478} Beauville, Narasimhan, and Ramanan show the natural isomorphism $\mathcal{O}_X \oplus L^{-1} \oplus \cdots \oplus L^{-(n-1)}  \cong \pi_*\mathcal{O}_S$ as follows. Choose sections $a_i \in \HH^0(X, L^i)$ such that the spectral curve $\pi : S \to X$ defined as the zero scheme of the section $\bar{p}(\bar{\lambda}) \in \HH^0(\bar{Y}, L^n \otimes \mathcal{O}_{\bar{Y}}(n))$ is integral. Then interpret the algebraic structure of $S$ to be $\operatorname{Spec}(\operatorname{\Sym}(L^{-1})/\mathcal{I})$ where $\mathcal{I}$ is the ideal sheaf generated by the image of the homomorphism $u : L^{-n} \to \operatorname{Sym}(L^{-1})$ given as the sum of the imbeddings $L^{-n} \to L^{-(n-i)}$ defined by $a_i$. Under this description the natural isomorphism is immediate. 
\end{remark}

\subsection{Relative Duality and Trace Map} Let $\pi : S \to X$ be a smooth spectral curve defined by $p(\lambda)$. For our purposes, relative duality states that for every holomorphic line bundle $N \in \Pic(S)$ there is a canonical isomorphism $\pi_*(N)^* \cong \pi_*(N^*(R)),$ where $R$ denotes the ramification divisor of $\pi.$ In \cite[pp. 6]{MR3618052}, Hitchin established the isomorphism by defining a fibrewise non-degenerate pairing. 
In fact, the proof generalises to any branched cover of compact Riemann surfaces since locally a branched cover is a spectral curve by a suitable choice of holomorphic coordinates. To establish our desired results we require the {\em trace map}. Hence, we will introduce the trace map and also provide an alternative non-degenerate pairing that involves the trace map. Let $f : \Sigma' \to \Sigma$ be a branched cover between two compact Riemann surfaces. Denote the function fields of $\Sigma$ and $\Sigma'$ by $K(\Sigma)$ and $K(\Sigma')$ respectively. Then, the {\em trace map} $\Tr_{\Sigma' / \Sigma} : K(\Sigma') \to K(\Sigma)$ is defined by 
\[
\Tr_{\Sigma'/\Sigma}(g)(x) = \sum_{f(y) = x} g(y).
\]
For each open subset $U \subseteq \Sigma$ the trace map sends meromorphic functions of $\Sigma'$ over $f^{-1}(U)$ to meromorphic functions of $\Sigma$ defined over $U$. Hence, we may regard the trace map as a sheaf map $\Tr_{\Sigma'/\Sigma} : f_* \mathcal{M}_{\Sigma'} \to \mathcal{M}_{\Sigma}$. In fact, the trace map naturally generalises to sections of holomorphic vector bundles. Indeed, if $E \to \Sigma$ is a holomorphic vector bundle, then the trace map defines a sheaf map $\Tr_{\Sigma'/\Sigma} : f_*\mathcal{M}_{\Sigma'}(f^*(E)) \to \mathcal{M}_{\Sigma}(E)$ where $\mathcal{M}_{\Sigma}(E) := \mathcal{O}_{\Sigma}(E) \otimes_{\mathcal{O}_{\Sigma}} \mathcal{M}_{\Sigma}$ denotes the meromorphic sections of $E$, and similar for $f_*\mathcal{M}_{\Sigma'}(f^*(E)).$ The reader may easily verify that $\Tr_{\Sigma'/\Sigma}$ defines a homomorphism of sheaves of $\mathcal{M}_{\Sigma}$-modules. Of course, the trace map preserves holomorphicity, and hence, the foregoing discussion remains true when restricted to holomorphic sections. 

\bigskip

Now, we will establish the non-degenerate pairing for relative duality. As mentioned before it is enough to establish relative duality for smooth spectral curves, and since non-degeneracy is a local property it suffices to define the pairing for the trivial line bundle. Let  $U \subseteq X$ be an open subset trivialising any vector bundle. From Proposition {\ref{structuresheaf}} it follows that $1/\partial_\lambda \, p(\lambda), \lambda / \partial_\lambda \, p(\lambda), \ldots, \lambda^{n-1} / \partial_\lambda \, p(\lambda)$ defines a basis for the $\mathcal{O}_X(U)$-module $\mathcal{O}_S(R)(\pi^{-1}(U))$ since $(\partial_\lambda \, p(\lambda)) = R.$ In this setting, {\em Euler's formula} states
\[
\Tr_{S/X}(\lambda^j / \partial_\lambda \, p(\lambda)) = 
\begin{cases}
0 & \text{for $j = 0, \ldots, n-2$} \\
1 & \text{for $j = n-1$}.
\end{cases}
\]
The reader may find a proof of Euler's formula in \cite[Sec. III.2 pp. 127-128]{MR1215934}. From Euler's formula it follows that the image of $\pi_*(\mathcal{O}_S(R))$ under $\Tr_{S/X}$ defines a subsheaf of $\mathcal{O}_X,$ i.e., $\Tr_{S/X} : \pi_*(\mathcal{O}_S(R)) \to \mathcal{O}_X.$ This provides enough machinery to establish the non-degenerate pairing. Indeed, consider the $\mathcal{O}_X$-bilinear pairing \[\langle \, , \, \rangle : \pi_*(\mathcal{O}_S) \otimes \pi_*(\mathcal{O}_S(R)) \to \mathcal{O}_X\] defined by
\begin{equation}
\label{pairing}
\langle a, b \rangle = \Tr_{S/X}(a \otimes b).
\end{equation}
Over the local frames $1, \lambda, \ldots, \lambda^{n-1}$ and $1/\partial_\lambda \, p(\lambda), \lambda / \partial_\lambda \, p(\lambda), \ldots, \lambda^{n-1} / \partial_\lambda \, p(\lambda)$ for $\pi_*(\mathcal{O}_S)$ and $\pi_*(\mathcal{O}_S(R))$ respectively, it follows from Euler's formula that the intersection matrix has determinant $\pm 1$, which shows the pairing in (\ref{pairing}) is non-degenerate.

\subsection{BNR Corresondence}\label{secBNR}

Let $\pi : S \to X$ be a given smooth spectral curve. From Section \ref{sec2} recall that pushing forward a holomorphic line bundle $N \in \Pic(S)$ and the tautological section $\lambda$ defines a $\GL_n$-Higgs bundle with spectral curve $S$. It turns out that every $\GL_n$-Higgs bundle with spectral curve $S$ is induced by the pushforward of a  line bundle and the tautological section. Moreover, the line bundle determines the Higgs bundle up to isomorphism. This is known as the BNR-correspondence, or spectral curve correspondence. 

\begin{theorem}[{\cite[Proposition 3.6]{MR998478}}]
\label{BNR}
Suppose $\pi: S \to X$ is a smooth spectral curve. There is a bijective correspondence between isomorphism classes of Higgs bundles whose spectral curve is given by $S$ and isomorphism classes of holomorphic line bundles on $S$. The correspondence is given by associating a line bundle $N \in \Pic(S)$ to the Higgs bundle $(\pi_*(N), \pi_*(\lambda))$ where $\lambda : N \to \pi^*(L) \otimes N$ is the tautological section. 
\end{theorem}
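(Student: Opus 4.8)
The plan is to prove the BNR correspondence (Theorem \ref{BNR}) by constructing an inverse to the pushforward operation $N \mapsto (\pi_*(N), \pi_*(\lambda))$ described in Section \ref{sec2}, and checking that the two constructions are mutually inverse up to isomorphism. Given a $\GL_n$-Higgs bundle $(E, \Phi)$ whose spectral curve is exactly $S$, the idea is that $\Phi$ makes $E$ into a module over the sheaf of algebras $\pi_*\mathcal{O}_S$. More precisely, $\mathcal{O}_S$ as an $\mathcal{O}_X$-algebra is generated by $\lambda$ subject to the relation $p(\lambda) = 0$; since the characteristic polynomial of $\Phi$ is $p$, the Cayley--Hamilton theorem gives $p(\Phi) = 0$ as an endomorphism $E \to L^n \otimes E$, so the assignment $\lambda \mapsto \Phi$ equips $E$ with the structure of a sheaf of modules over $\pi_*\mathcal{O}_S \cong \mathcal{O}_X \oplus L^{-1} \oplus \cdots \oplus L^{-(n-1)}$ (using Proposition \ref{structuresheaf}). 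Because $\pi$ is finite (again by Proposition \ref{structuresheaf}), a coherent sheaf on $X$ with a $\pi_*\mathcal{O}_S$-module structure is the same data as a coherent sheaf $N$ on $S$ with $\pi_*N = E$, with the $\mathcal{O}_S$-action of $\lambda$ recovering $\pi_*(\lambda) = \Phi$.

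The key steps, in order, would be: (i) recall that $\mathcal{O}_S$ is generated over $\pi^{-1}\mathcal{O}_X$ by the single element $\lambda$ with relation $p(\lambda)=0$, so that a $\pi_*\mathcal{O}_S$-module structure on $E$ is precisely the choice of an endomorphism $\Phi \in \HH^0(X, \End(E)\otimes L)$ with $p(\Phi) = 0$; (ii) observe that for any Higgs bundle with spectral curve $S$, the characteristic polynomial of $\Phi$ is $p$, hence $p(\Phi) = 0$ by Cayley--Hamilton, so $(E,\Phi)$ canonically determines a $\pi_*\mathcal{O}_S$-module structure on $E$; (iii) invoke the equivalence between quasi-coherent $\pi_*\mathcal{O}_S$-modules on $X$ and quasi-coherent $\mathcal{O}_S$-modules on $S$ (valid since $\pi$ is finite, so $S = \operatorname{Spec}_X \pi_*\mathcal{O}_S$ relatively over $X$), producing a coherent sheaf $N$ on $S$ with $\pi_*N = E$ and $\pi_*(\text{mult. by }\lambda) = \Phi$; (iv) check $N$ is a line bundle: since $S$ is smooth it suffices to show $N$ is torsion-free of rank $1$, and rank $1$ follows by computing over the generic point of $X$, where $\Phi$ has $n$ distinct eigenvalues, so the fibre of $N$ over each of the $n$ sheets of $S$ is one-dimensional; (v) verify functoriality/bijectivity: the two constructions $N \mapsto (\pi_*N, \pi_*\lambda)$ and $(E,\Phi)\mapsto N$ are inverse to each other because the $\mathcal{O}_S$-module structure on $\pi_*N$ coming from $\pi_*(\lambda)$ is tautologically the original one, and isomorphisms of Higgs bundles correspond to isomorphisms of $\mathcal{O}_S$-modules, hence of line bundles on $S$.

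I expect the main obstacle to be step (iv): showing that the recovered sheaf $N$ is genuinely a line bundle rather than merely a rank-one torsion-free sheaf. The cleanest argument uses that $S$ is smooth, so a rank-one torsion-free coherent sheaf on a smooth curve is automatically locally free; but one must first be careful that $N$ is torsion-free — this follows because $E = \pi_*N$ is a vector bundle (hence torsion-free on $X$) and $\pi$ is finite, so $N$ has no subsheaf supported on a proper closed subscheme. Establishing the rank is exactly $1$ requires the genericity input that $\Phi$ has distinct eigenvalues at a general point of $X$ (equivalently, that $S \to X$ is generically étale, which holds because $S$ is smooth and $L$ has positive degree so $\pi$ is ramified but generically unramified), identifying the $\pi_*\mathcal{O}_S$-module $E$ at the generic point of $X$ with a module over the étale algebra $\prod_{i=1}^n K(X)$, forcing each factor to be $1$-dimensional by counting $\dim_{K(X)} E = n$. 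A secondary subtlety worth spelling out is the naturality of the finite-morphism module equivalence in the holomorphic/analytic category; since $\pi$ is finite and $S,X$ are smooth projective curves one may pass to the algebraic category via GAGA, or alternatively argue directly that $\pi_*$ gives an equivalence between coherent $\mathcal{O}_S$-modules and coherent $\pi_*\mathcal{O}_S$-modules on $X$ as in the affine case, working locally over a trivialising open $U \subseteq X$ where $\pi^{-1}(U) \to U$ is literally $\operatorname{Spec}$ of a finite free $\mathcal{O}_X(U)$-algebra.
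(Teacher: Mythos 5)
Your proposal is correct in substance, but it takes a genuinely different route from the one the paper sketches in Section \ref{secBNR} --- in fact it is essentially the argument of the cited reference \cite{MR998478} itself. You interpret the Higgs field as a $\pi_*\mathcal{O}_S$-module structure on $E$ (via Cayley--Hamilton together with Proposition \ref{structuresheaf}) and invoke the equivalence between coherent $\mathcal{O}_S$-modules and coherent $\pi_*\mathcal{O}_S$-modules for the finite map $\pi$, so that everything reduces to checking the recovered sheaf $N$ is torsion-free of rank one, hence a line bundle on the smooth curve $S$. The paper instead builds the inverse explicitly: it forms the eigenspace line bundle $A=\ker(\lambda-\pi^*\Phi)\subset\pi^*(E)$ (using that $\Phi$ is regular because $S$ is smooth), twists by the ramification divisor $R$, and uses Euler's formula and the relative-duality pairing (\ref{pairing}) to show that the trace map $\Tr_{S/X}:\pi_*(A(R))\to E$ is an isomorphism intertwining $\pi_*\lambda$ with $\Phi$, and that $N\cong A(R)$ for the reverse composite. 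Your route buys economy and generality: no regularity of $\Phi$, no ramification bookkeeping, and it extends verbatim to integral singular spectral curves with line bundles replaced by rank-one torsion-free sheaves; the paper's route buys the explicit eigenline/trace-map machinery that it deliberately sets up here because it is reused later to transport the symplectic and orthogonal structures to the spectral side (Lemmas \ref{involutionpull} and \ref{dualhiggs}, the forms (\ref{symplectic form}) and (\ref{orthogform})). One small correction to your step (iv): at the generic point of $X$ the algebra $\pi_*\mathcal{O}_S\otimes K(X)\cong K(X)[\lambda]/(p(\lambda))$ is not $\prod_{i=1}^n K(X)$; it is the product of the function fields of the irreducible components of $S$ (the degree-$n$ field $K(S)$ when $S$ is irreducible). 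The rank-one conclusion still follows, either from $\dim_{K(X)}(E\otimes K(X))=n=[K(S):K(X)]$ in the irreducible case, or in general by comparing the characteristic polynomial $\prod_i p_i^{r_i}$ of $\Phi$ acting on the generic fibre with $p=\prod_i p_i$, which forces every $r_i=1$; so this is a matter of phrasing rather than a gap.
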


The BNR-correspondence has been worked out in more cases, e.g., Hitchin computed the correspondence for $G_2$ see \cite[Theorem 2]{MR2354922}; and Mukhopadhyay and Wentworth computed the correspondence for $\operatorname{Spin}(N)$ and twisted $\operatorname{Spin}(N)$ see \cite[Theorem 1.1]{MR4230392}. Further, Scheinost and Schottenloher computed the spectral data for parabolic Higgs bundles for $G = \GL_n$ see \cite[Theorem 5.16]{MR1353317}; and Donagi and Gaitsgory considered {\em cameral data} instead of spectral data to give an analogous correspondence for arbitrary $G$, see \cite[Theorem 6.4]{MR1903115}. %

\section{Prym Varieties}\label{prymvarieties} Prym varieties are a special type of abelian variety canonically associated to branched covers between compact Riemann surfaces. Before defining Prym varieties we need to introduce the {norm map}. Suppose $f : \Sigma' \to \Sigma$ is a branched cover of compact Riemann surfaces, then the {\em norm map} is the assignment $\Nm_{\Sigma'/\Sigma} : \Pic(\Sigma') \to \Pic(\Sigma)$ defined on divisor classes by $\Nm_{\Sigma'/\Sigma}(\sum_{i=1}^n a_i x_i) = \sum_{i=1}^n a_if(x_i).$ It is easy to verify that the norm map defined on divisor classes preserves linear equivalence, which induces the map on the Picard group. In fact, since the norm map preserves the degree of the divisor restricting to $\Jac(\Sigma')$ defines a map $\Nm_{\Sigma'/\Sigma} : \Jac(\Sigma') \to \Jac(\Sigma).$ Now we may define a Prym variety.

\begin{definition}
Let $f : \Sigma' \to \Sigma$ be a branched cover between two compact Riemann surfaces. The {\em Prym variety} associated to $f : \Sigma' \to \Sigma$ is the connected component of the kernel of the norm map $\Nm_{\Sigma'/\Sigma}$ containing $\mathcal{O}_{\Sigma'}$, i.e., $\Prym(\Sigma', \Sigma) := (\ker(\Nm_{\Sigma'/\Sigma}))_0.$ 
\end{definition}

There are several  well-known properties associated to abelian varieties that we will take for granted where we refer the reader to the canonical references \cite{MR2062673, MR0282985} for complex abelian varieties. For instance, for a homomorphism $g : A_1 \to A_2$ between two abelian varieties, $\ker(g)$ is connected if and only if the dual map $g^\vee : A_2^\vee \to A_1^\vee$ is injective; and the dual homomorphism of  the norm map $\Nm_{\Sigma'/\Sigma} : \Jac(\Sigma') \to \Jac(\Sigma)$ is the pullback map, i.e., $f^* : \Jac(\Sigma) \to \Jac(\Sigma').$ Hence, $\Prym(\Sigma', \Sigma) = \ker(\Nm_{\Sigma'/\Sigma})$ if and only if $f^* : \Jac(\Sigma) \to \Jac(\Sigma')$ is injective. There is a necessary and sufficient condition to verify when $f^*$ is injective. Namely, $f^* : \Jac(\Sigma) \to \Jac(\Sigma')$ is not injective if and only if $f : \Sigma' \to \Sigma$ factorises through a cyclic \'etale cover of degree at least 2. The proof of this can be found in \cite[Proposition 4.3. pp. 337]{MR2062673}. 

\section{$\Sp_{2n}$-Higgs Bundles}\label{secsp2n}

Principal $\Sp_{2n}$-bundles correspond to rank $2n$ holomorphic vector bundles equipped with a symplectic form. Hence, a $\Sp_{2n}$-Higgs bundle is a triple $(E, \Phi, \omega)$ where $E$ is a rank $2n$ holomorphic vector bundle, $\omega : E \otimes E \to \mathcal{O}_C$ is a symplectic form, and $\Phi : E \to L \otimes E$ is a homomorphism such that $\omega(\Phi v, w) + \omega(v, \Phi w) = 0.$ To compute the linear system of divisors on $L^{2n}$ that parameterises spectral curves associated to $\Sp_{2n}$-Higgs bundles, we follow Hitchin's argument from \cite[Sec. 5.10]{MR0885778}. Let $A \in \mathfrak{sp}_{2n}$ have distinct eigenvalues $\lambda_i$ with corresponding eigenvectors $v_i \in \CC^{2n}$. Then,
\[
\lambda_i \omega(v_i, v_j) = \omega(A v_i, v_j) = -\omega(v_i, Av_j) = -\lambda_j \omega(v_i, v_j),
\] 
and hence, $(\lambda_i + \lambda_j) \omega(v_i, v_j) = 0.$  Thus, either $\lambda_i = -\lambda_j$ or $\omega(v_i, v_j) = 0.$ Since $\omega$ is non-degenerate it follows that eigenvalues occur in opposite pairs, and hence, the characteristic polynomial of $(E, \Phi, \omega)$ is given by
\begin{equation}
\label{sympleccharc}
p(\lambda) = \lambda^{2n} + a_2\lambda^{2n-2} + \cdots + a_{2n}.
\end{equation}
The $a_2, \ldots, a_{2n}$ defines a basis for the invariant polynomials on $\mathfrak{sp}_{2n}$. Allowing the $a_{2i}$ to vary defines a linear system of divisors and it follows by Bertini's theorem that a generic spectral curve is smooth. Note that from (\ref{sympleccharc}) a symplectic spectral curve $\pi: S \to X$ possesses a canonical involution defined by $\sigma(\lambda) = -\lambda$. Since the spectral curve is smooth, by the BNR-correspondence the Higgs bundle $(E, \Phi)$ is characterised by a holomorphic line bundle $N \to S$ where $E \cong \pi_*(N)$ and $\Phi$ is induced by the tautological section $\lambda :  N \to \pi^*(L) \otimes N$. Using the symplectic structure $\omega$ we will deduce further structure on $N$. To do so, we will define the notion of a {\em dual Higgs bundle}.

\begin{definition}
Let $(E, \Phi, \omega)$ be a $\Sp_{2n}$-Higgs bundle. The {\em dual Higgs bundle} (with respect to $\omega$) is the pair $(E^*, \Phi^t)$ where $E^*$ is the dual holomorphic vector bundle, and $\Phi^t : E^* \to L\otimes E^*$ is the adjoint of $\Phi$ with respect to $\omega.$
\end{definition}

Next, we require two short lemmas.

\begin{lemma}
\label{involutionpull}
Suppose that under the $BNR$-correspondence the Higgs bundle $(E, \Phi)$ corresponds to $N$. Then, $(E, -\Phi)$ corresponds to $\sigma^*(N)$.
\end{lemma}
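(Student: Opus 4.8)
The plan is to unwind the BNR-correspondence on both sides and to track how the eigenspace line bundle transforms when $\Phi$ is replaced by $-\Phi$. Recall from Section \ref{secBNR} that the line bundle attached to a Higgs bundle $(E,\Phi)$ with smooth spectral curve $S$ is (a twist of) the eigenspace line bundle $A = \ker(\lambda - \pi^*\Phi) \subset \pi^*E$, where $\lambda$ is the tautological section on $S$. The key observation is that both $(E,\Phi)$ and $(E,-\Phi)$ have the same spectral curve $S$: indeed the characteristic polynomial \eqref{sympleccharc} is even, so $p(\lambda)$ and $\det(\lambda + \pi^*\Phi)$ cut out the same divisor in $\operatorname{Tot}(L)$. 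This is exactly what makes the statement plausible, since $\sigma(\lambda) = -\lambda$ is then an automorphism of $S$ over $X$.

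First I would set up the comparison carefully. Over $S$ we have the tautological inclusion and the eigenspace bundle $A = \ker(\lambda - \pi^*\Phi)$ for $(E,\Phi)$; call $A'$ the eigenspace bundle $\ker(\lambda - \pi^*(-\Phi)) = \ker(\lambda + \pi^*\Phi)$ for $(E,-\Phi)$. The point is that $\sigma : S \to S$ is the identity on the underlying space after forgetting the $\lambda$-coordinate, i.e. $\pi \circ \sigma = \pi$, so $\sigma^*\pi^*E \cong \pi^*E$ canonically. Under this identification, a local section of $\sigma^*A'$ at a point $s \in S$ is a section of $A'$ at $\sigma(s)$, which is an eigenvector of $\Phi$ for the eigenvalue $\lambda(\sigma(s)) = -\lambda(s)$ — equivalently an eigenvector of $-\Phi$ for the eigenvalue $\lambda(s)$... wait, I want it the other way. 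Concretely: at $s$, the fibre $A_s$ is the $\lambda(s)$-eigenspace of $\Phi$; the fibre $(\sigma^*A)_s = A_{\sigma(s)}$ is the $\lambda(\sigma(s))$-eigenspace of $\Phi$, i.e. the $(-\lambda(s))$-eigenspace of $\Phi$, i.e. the $\lambda(s)$-eigenspace of $-\Phi$, which is precisely $A'_s$. Hence $\sigma^*A \cong A'$ as subbundles of $\pi^*E$.

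Next I would feed this through the normalisation twist. In Section \ref{secBNR} the correspondence sends $(E,\Phi) \mapsto N$ with $A \cong N(-R)$ (equivalently $N \cong A(R)$), where $R$ is the ramification divisor of $\pi$. The same recipe applied to $(E,-\Phi)$ gives its BNR line bundle as $A'(R) \cong \sigma^*A(R) \cong \sigma^*(A(R))$, using that $\sigma$ permutes the fibres of $\pi$ and hence $\sigma^*R = R$ as divisors (an automorphism over $X$ preserves ramification). Therefore the line bundle corresponding to $(E,-\Phi)$ is $\sigma^*(A(R)) = \sigma^*N$, as claimed. One should also check that $\sigma^*\lambda$, viewed as a Higgs field on $\pi_*(\sigma^*N)$, really does push forward to $-\Phi$; this follows because $\lambda \circ \sigma = -\lambda$ on $S$, so the tautological section on $\sigma^*N$ is $-\lambda$ composed with the identification, and pushing forward is compatible with $\sigma$ since $\pi\circ\sigma=\pi$.

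The main obstacle is making the fibrewise eigenspace identification into a genuine isomorphism of holomorphic line bundles rather than a pointwise statement — i.e. checking that the identification $\sigma^*A \cong A'$ inside $\pi^*E$ is holomorphic, not just set-theoretic. The cleanest way is to note that both $A$ and $A'$ are defined as kernels of holomorphic bundle maps $\pi^*E \to \pi^*(L)\otimes\pi^*E$ (namely $\lambda - \pi^*\Phi$ and $\lambda + \pi^*\Phi$) which have locally constant (in fact rank $2n-1$) corank since the spectral curve is smooth and $\Phi$ is regular, so the kernels are subbundles; and $\sigma^*(\lambda - \pi^*\Phi) = \sigma^*\lambda - \pi^*\Phi = -\lambda - \pi^*\Phi = -(\lambda + \pi^*\Phi)$ as maps $\sigma^*\pi^*E \cong \pi^*E \to \pi^*L\otimes\pi^*E$, so the kernels literally coincide: $\sigma^*A = \sigma^*\ker(\lambda-\pi^*\Phi) = \ker(\sigma^*(\lambda - \pi^*\Phi)) = \ker(\lambda + \pi^*\Phi) = A'$. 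Once phrased this way the argument is essentially formal.
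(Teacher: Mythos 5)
Your argument is correct and is exactly the ``unravelling of definitions'' the paper leaves as an exercise: you identify the eigenspace bundle of $(E,-\Phi)$ with $\sigma^*A$ via $\sigma^*(\lambda-\pi^*\Phi)=-(\lambda+\pi^*\Phi)$ and then twist by $R$, using $\sigma^*R=R$, to get $\sigma^*(A(R))=\sigma^*N$. The closing observation that multiplication by $\lambda$ on $\sigma^*N$ pushes forward to $-\Phi$ (since $\lambda\circ\sigma=-\lambda$ and $\pi\circ\sigma=\pi$) completes the consistency check, so nothing is missing.
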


\begin{lemma}
\label{dualhiggs}
Suppose that under the $BNR$-correspondence the Higgs bundle $(E, \Phi)$ corresponds to $N$. Then, the dual Higgs bundle $(E^*, \Phi^t)$ corresponds to $N^*(R)$ where $R$ denotes the ramification divisor of $\pi : S \to X.$
\end{lemma}

The proof of the first lemma is an exercise of unraveling definitions, and the proof of the second lemma follows from relative duality.  

\bigskip

The symplectic form induces an isomorphism $(E, -\Phi) \cong (E^*, \Phi^t)$, and by BNR induces an isomorphism $\theta : \sigma^*(N) \to N^*(R)$, which is unique up to scalar multiplication. Therefore, the $\Sp_{2n}$-Higgs bundle $(E, \Phi, \mu)$ defines a pair $(N, \theta)$ where $N \to S$ is a holomorphic line bundle and $\theta : \sigma^*(N) \to N^*(R)$ is an isomorphism.   

\bigskip

We must show that the isomorphism $\theta$ recovers the symplectic form establishing a one-to-one correspondence. The symplectic form $\omega : E \otimes E \to \mathcal{O}_X$ is unique up to scalar multiplication.\,\footnote{\, Using the fact $S$ is smooth a similar proof to that of Schur's lemma shows that the automorphisms of $(E, \Phi)$ are scalar multiples of the identity from which the claim follows.} 
Thus, it suffices to show that $\theta$ induces, in a natural way, a $\Phi$-compatible symplectic form. Let $U \subseteq X$ be a given open subset, and consider the $\mathcal{O}_X$-bilinear form defined by 
\begin{equation}
\label{symplectic form}
\mu(a, b) := \Tr_{S/X}(a \otimes \theta \sigma^* b)
\end{equation}
where $a,b \in \mathcal{O}_S(N)(\pi^{-1}(U)).$ Since $\theta$ is nowhere vanishing, and the pairing from relative duality is non-degenerate, it follows that $\mu$ is non-degenerate. Moreover, from $\Phi = \pi_* \lambda$ it is easily verified that $\mu$ is $\Phi$-compatible. We are left to verify that $\mu$ is skew-symmetric. Let $\bar{\sigma}$ be a linearisation of $\sigma$ on $\sigma^*(N^*)N^*(R).$ Then $\bar{\sigma}^2$ acts as $-1$ on the fibres and $\bar{\sigma}^* \theta = -\theta$. Since $\Tr_{S/X}(\sigma^*\cdot) = \Tr_{S/X}(\cdot)$,
%
\begin{equation}
\label{skew}
\mu(b,a) = \Tr_{S/X} \sigma^*(b \otimes  \theta \sigma^* a) = \Tr_{S/X} (\sigma^*b \otimes  \bar{\sigma}^*\theta a) = -\Tr_{S/X} (a \otimes  \theta \sigma^* b) = -\mu(a, b).
\end{equation}
Thus, (\ref{skew}) shows that the bilinear form $\mu$ is skew-symmetric. 
%
%
%
Therefore, the spectral data for $(E, \Phi, \omega)$ is precisely the pair $(N, \theta)$ where $N \in \Pic(S)$ and $\theta \in \HH^0(S, \sigma^*(N^*)N^*(R))$ is nowhere vanishing. We will now reformulate the spectral data into the language of Prym varieties. 

\bigskip

The compact Riemann surface $\bar{S} := S/\sigma$ defines a spectral curve over $X$ defined by the polynomial
\[
\eta^n + b_1 \eta^{n-1} + \cdots + b_n
\]  
where $\eta$ is the tautological section of $L^2$ and $b_i := a_{2i}$. We denote the natural map from $\bar{S}$ to $X$ by $q:\bar{S} \to X$, then if $p : S \to \bar{S}$ is the natural projection map we see $\pi = q\circ p.$ By the adjunction formula we may consider $\theta$ as an isomorphism $ \theta : N\sigma^*(N) \to \pi^*(L^{2n-1}).$ Now, $p^* \Nm_{S/\bar{S}}(N) \cong N\sigma^*(N),$ and hence, 
$
p^* \Nm_{S/\bar{S}}(N) \cong p^*q^* L^{2n-1}.
$
Since $\pr:S \to \bar{S}$ is a ramified double cover, $p^*: \Jac(\bar{S}) \to \Jac(S)$ is injective, and thus, $\Nm_{S/\bar{S}}(N) \cong q^*(L^{2n-1}).$ Therefore, $N$ belongs to a torsor of the Prym variety $\Prym(S, \bar{S})$, and we have shown that isomorphism classes of $\Sp_{2n}$-Higgs bundles with smooth spectral curve $\pi:S \to X$ is canonically identified to a torsor of the Prym variety $\Prym(S, \bar{S}).$ 

\section{$\SO_{2n+1}$-Higgs Bundles}\label{secso2n+1}

Principal $\SO_{m}$-bundles over $X$ correspond to rank $m$ holomorphic vector bundles equipped with a symmetric non-degenerate bilinear form. Hence, a $\SO_{m}$-Higgs bundle is a triple $(E, \Phi, Q)$ where $E$ is a rank $m$ holomorphic vector bundle with $\det(E) \cong \mathcal{O}_X$, $Q : E \otimes E \to \mathcal{O}_X$ is a non-degenerate $\mathcal{O}_X$-bilinear symmetric form, and $\Phi : E \to L \otimes E$ is a homomorphism such that $Q(\Phi(v), w) + Q(v, \Phi(w)) = 0.$ Let $(E, \Phi, Q)$ be a $\SO_{2n+1}$-Higgs bundle. By a similar argument as the one used in Section \ref{secsp2n} we see that generically eigenvalues occur in opposite pairs and zero is an eigenvalue. Hence, the characteristic polynomial of $(E, \Phi, Q)$ is given by
\begin{equation}
\label{SO2n+1}
\det(\lambda - \pi^*(\Phi)) = \lambda(\lambda^{2n} + a_2\lambda^{2n-2} + \ldots + a_{2n}).
\end{equation}

Note that the corresponding spectral curve will always be singular since $L$ has positive degree. However, 
\[
p(\lambda) = \lambda^{2n} + a_2\lambda^{2n-2} + \ldots + a_{2n}
\]
defines a symplectic spectral curve, so we may assume the $a_{2i} \in \HH^0(X, L^i)$ define a smooth symplectic spectral curve. Denote the canonical involution by $\sigma$. From (\ref{SO2n+1}) it is evident that the zero eigenbundle establishes a correspondence between generic $\SO_{2n+1}$-Higgs bundles and generic $\Sp_{2n}$-Higgs bundles. The correspondence is simple away from the singularities of the $\SO_{2n+1}$ spectral curve, and we will largely recall Hitchin's argument from \cite[Sec. 4]{MR2354922}. To extend the correspondence over the singularities, we will use a Hecke modification argument and utilise local calculations in a frame. Also, in establishing the correspondence over the singularities we will introduce more spectral data, which amounts to dualising the Prym variety from the $\Sp_{2n}$ classification realising Langlands duality.

\subsection{Correspondence away from the Singularities}{\label{1}}

Since the singularities of the spectral curve defined by (\ref{SO2n+1}) occur precisely when $\lambda = 0$ and $a_{2n} = 0$, let $D = (a_{2n})$ and consider $X_0 = X-D.$ We will establish the correspondence now over $X_0.$ Note, we will consider objects defined over $X$ and then restrict our attention to $X_0.$ Suppose $(V, \phi, Q)$ is a $\SO_{2n+1}$-Higgs bundle and consider the subbundle $V_0$ induced by the kernel of the sheaf map $\phi : \mathcal{O}_X(V) \to \mathcal{O}_X(L \otimes V).$ Generically the eigenvalue $\lambda = 0$ has multiplicity 1 so it is clear that the subbundle $V_0 \subset V$ defines a holomorphic line bundle. Thus, the quotient bundle $E = V/V_0$ defines a rank $2n$ holomorphic vector bundle over $X$. The Higgs field $\phi : V \to L \otimes V$ induces a Higgs field $\Phi : E \to L \otimes E$ whose characteristic polynomial is given by $p(\lambda).$ Next, we consider $\omega : E \otimes E \to L$ defined by 
\begin{equation}
\label{symplecticform}
\omega(v, w) = Q(\phi(\tilde{v}), \tilde{w})
\end{equation}
where $\tilde{v}, \tilde{w}$ are lifts of the sections to $v, w$ to $V$ respectively. Since $V_0$ is induced by the kernel of $\phi$ it is easy to see that $\omega$ is well-defined, and it is easy to verify that $\omega$ is $\Phi$-compatible and skew-symmetric. Moreover, since $\ker(\phi_y) = (V_0)_y$ and $Q_y$ is non-degenerate for every $y \in X_0$, it follows that $\omega$ is non-degenerate over $X_0.$ 

\begin{construction}
\label{firstone}
Suppose $(V, \phi, Q)$ is a generic $\SO_{2n+1}$-Higgs bundle over $X$ whose characteristic polynomial is of the form $\lambda p(\lambda)$ where $p(\lambda)$ defines a smooth symplectic spectral curve. The kernel of the Higgs field induces a holomorphic subbundle $V_0 \subset V$ that has rank 1 and $E = V/V_0$ defines a rank $2n$ holomorphic vector bundle over $X$. Moreover, the map $\omega : E \otimes E \to L$ defined by $\omega(v, w) = Q(\phi (\tilde{v}), \tilde{w})$ defines a symplectic form on $X_0 = X - D$ where $D = (a_{2n}).$ Further, the Higgs field $\phi$ induces a Higgs field $\Phi : E \to L \otimes E$ whose characteristic polynomial is given by $p(\lambda)$ and $\omega$ is $\Phi$-compatible. Thus, $(E, \Phi, \omega)$ defines a $\Sp_{2n}$-Higgs bundle over $X_0$ whose characteristic polynomial is the restriction of $p(\lambda).$
\end{construction}

\begin{remark}
The triple $(E, \Phi, Q)$ established in Construction \ref{firstone} is globally defined over $X$. To show $(E, \Phi, Q)$ defines a $\Sp_{2n}$-Higgs bundle over $X$ we are left to show that $\omega$ is non-degenerate over $D.$
\end{remark}

To provide the reverse construction we need to recover the zero eigenbundle. We will construct a canonical zero eigensection $v_0 \in \HH^0(X_0, V_0)$, and establish a natural isomorphism to a known line bundle. Consider the skew symmetric form $\alpha$ of $\Wedge^2 V^*$ defined by $\alpha(v,w) = Q(\phi v, w).$ Let $y \in X_0$ be given, and let $e^0, \ldots, e^{2n}$ be an orthonormal frame for $V^*$ over $X_0$ with respect to $Q.$ In the eigenbundle decomposition $V = V_0 \oplus V_{\lambda_1} \oplus V_{-\lambda_1} \oplus \cdots \oplus V_{\lambda_n} \oplus V_{-\lambda_n}$ the orthogonal form $Q$ pairs opposite eigenvalues, hence we may assume without loss of generality that 
\[
\alpha = i\lambda_1 e^1 \wedge e^2 + \cdots + i\lambda_n e^{2n-1} \wedge e^{2n}.
\]
Note that $\alpha^n = n!i^n \lambda_1 \cdots \lambda_n e^1 \wedge \cdots \wedge e^{2n}.$ Denote the dual orthonormal frame with respect to $Q$ by $e_0, \ldots, e_{2n}$. Let $\nu = e^0 \wedge \cdots \wedge e^{2n}$ denote the $\SO_{2n+1}$-invariant volume form for $V^*.$ Consider now the section $v_0 = i^n \lambda_1\cdots\lambda_n e_0$ of $L^n \otimes V$. Since $e^j(e_i) = \delta_i{}^j$ notice that $\alpha^n = n!i_{v_0}(\nu)$. Now, a straightforward calculation shows $\phi \cdot i_{v_0} = i_{\phi(v_0)}(\nu) + i_{v_0}(\phi \cdot \nu)$, and since $\phi \cdot \nu = 0$ and $\phi \cdot \alpha = 0$ it follows that $i_{\phi(v_0)}(\nu) = 0.$ However, $\nu$ is non-degenerate, and thus, $\phi(v_0) = 0.$ Hence, $v_0$ is the desired canonical zero eigensection and since $v_0$ is nowhere vanishing it follows that $V_0 \cong L^{-n}$ over $X_0.$ Thus, we may use $L^{-n}$ to recover the zero eigenbundle, and we note that $Q(v_0, v_0) = a_{2n}.$ 

\bigskip

Suppose $(E, \Phi, \omega)$ is a $\Sp_{2n}$-Higgs bundle over $X$ with characteristic polynomial $p(\lambda)$ where $\omega$ is $L$-valued. Consider the rank $2n+1$ holomorphic vector bundle $V = L^{-1}E \oplus L^{-n}$ and Higgs field $\phi : V \to L \otimes V$ defined by $\phi(v,w) := (\Phi v, 0).$ Next, consider the symmetric $\mathcal{O}_X$-bilinear form $Q : V \otimes V \to \mathcal{O}_X$ defined by 
\[
Q((v,w), (v,w)) := \omega(v, \Phi v) + w^2 a_{2n}
\] 
that extends to every section by the polarisation identity. The reader may easily verify that $Q$ is $\phi$-compatible, and since over $X_0$ the section $a_{2n}$ is nowhere vanishing and $\omega$ is non-degenerate, it follows that $Q$ is non-degenerate. Also, since $X_0$ is open the condition $\det(V) \cong \mathcal{O}_{X_0}$ is vacuous. Thus, over $X_0$ the triple $(V, \phi, Q)$ defines a $\SO_{2n+1}$-Higgs bundle whose characteristic polynomial is given by $\lambda p(\lambda).$

\begin{construction}
\label{secondone}
Suppose $(E, \Phi, \omega)$ is a generic $\Sp_{2n}$-Higgs bundle where $\omega$ is $L$-valued and with characteristic polynomial $p(\lambda)$ that defines a smooth symplectic spectral curve. Then, $V = L^{-1}E \oplus L^{-n}$ defines a rank $2n+1$ holomorphic vector bundle, and $\Phi : E \to L \otimes E$ induces a Higgs field $\phi : V \to L \otimes V$ whose characteristic polynomial is given by $\lambda p(\lambda).$ Moreover, the $\mathcal{O}_X$-bilinear symmetric form $Q : V \otimes V \to \mathcal{O}_X$ defined by \[Q((v,w), (v,w)) = \omega(v, \Phi v) + w^2 a_{2n}\]and the polarisation identity is $\phi$-compatible and non-degenerate over $X_0 = X-D$ where $D = (a_{2n}).$  Thus, $(V, \phi, Q)$ defines a $\SO_{2n+1}$-Higgs bundle on $X_0$ whose characteristic polynomial is the restriction of $\lambda p(\lambda).$
\end{construction}

Unravelling the constructions in Construction \ref{firstone} and Construction \ref{secondone} shows that the constructions are mutual inverses. This establishes a one-to-one correspondence away from the singularities of $S$.

\subsection{Extending the Correspondence over the Singularities}{\label{2}}

Now, we extend the correspondence over the singularities. We continue using the same notation as in Sections \ref{secso2n+1} and \ref{1}. Before extending the correspondence over the singularities we will make one assumption, which is valid since $L$ is basepoint-free.

\begin{assumption}
\label{assumption}
We will assume that the sections $a_{2n-2}$ and $a_{2n}$ have no common zeros. 
\end{assumption}

One can check in local coordinates using the fact that $S$ is smooth that the zeros of $a_{2n}$ are simple. Let $(E, \Phi, \omega)$ be the $\Sp_{2n}$-Higgs bundle over $X_0$ that is associated to $(V, \phi, Q).$ The triple $(E, \Phi, \omega)$ is defined over $X$ and to show $(E, \Phi, \omega)$ defines a $\Sp_{2n}$-Higgs bundle over $X$ it suffices to show that $\omega$ is non-degenerate over $D$. To show this we first require a quick lemma. 

\begin{lemma}
Let $x \in X$ be a zero of $\det(\Phi).$ If $\dim(\ker(\Phi_x))>1$, then $\ord_x(\det(\Phi)) > 1.$ 
\end{lemma}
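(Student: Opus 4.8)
The statement is local, so I would work in a small disc $U$ around the point $x \in X$ with a local coordinate $z$ centred at $x$, trivialising $V$ and $L$ over $U$. In this frame the Higgs field $\Phi$ becomes a $2n\times 2n$ matrix $\Phi(z)$ of holomorphic functions, and $\det(\Phi)$ becomes $\det\Phi(z)$, a holomorphic function of $z$ vanishing at $z=0$. The claim is: if $\dim_{\CC}\ker\Phi(0) \geq 2$, then $z^2 \mid \det\Phi(z)$. The natural approach is linear algebra over the ring $\cO = \cO_{U,x}$ (or the formal/convergent power series ring), using that $\det\Phi(z)$ is the product of the invariant factors (elementary divisors) of the matrix $\Phi(z)$ viewed over the principal ideal domain $\cO$.

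The key steps, in order, are as follows. First, put $\Phi(z)$ into Smith normal form over $\cO$: there exist invertible matrices $A(z), B(z) \in \GL_{2n}(\cO)$ with $A(z)\Phi(z)B(z) = \diag(z^{k_1}, \ldots, z^{k_{2n}})$ where $0 \le k_1 \le \cdots \le k_{2n}$ (a zero diagonal entry would force $\det\Phi \equiv 0$, which is excluded since the spectral curve has $\lambda=0$ as a simple eigenvalue generically; but in any case if $\det\Phi$ is not identically zero all $k_i$ are finite). Then $\det\Phi(z) = (\text{unit})\cdot z^{k_1 + \cdots + k_{2n}}$, so $\ord_x(\det\Phi) = \sum_i k_i$. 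Second, observe that reducing the Smith normal form mod $z$ gives $\dim\ker\Phi(0) = \#\{i : k_i \geq 1\}$. Indeed, $A(0)\Phi(0)B(0) = \diag(z^{k_i}\bmod z)$ has rank equal to $\#\{i : k_i = 0\}$, and $A(0), B(0)$ are invertible, so $\operatorname{rank}\Phi(0) = \#\{i : k_i = 0\}$, whence $\dim\ker\Phi(0) = 2n - \#\{i : k_i=0\} = \#\{i : k_i \geq 1\}$. Third, combine: if $\dim\ker\Phi(0) \geq 2$, then at least two of the $k_i$ are $\geq 1$, so $\ord_x(\det\Phi) = \sum_i k_i \geq 2$, which is the desired conclusion. (One also notes $\ord_x(\det\Phi) \ge 1$ whenever $\dim\ker\Phi(0) \ge 1$, recovering the expected "simple zero $\Leftrightarrow$ one-dimensional kernel" dichotomy.)

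The only genuine subtlety — and the step I expect to require the most care — is the legitimacy of the Smith normal form over $\cO$: this uses that $\cO_{U,x}$ (holomorphic germs, or equivalently $\CC\{z\}$) is a principal ideal domain, in fact a discrete valuation ring, so every matrix over it admits a Smith normal form with the matrices $A, B$ invertible over the ring (not merely over the fraction field). This is standard but worth citing; alternatively one can avoid it entirely by the following elementary argument. If $\dim\ker\Phi(0) \geq 2$, pick two linearly independent vectors $v_1(0), v_2(0) \in \ker\Phi(0) \subset \CC^{2n}$; extend to a holomorphic frame $v_1(z), \ldots, v_{2n}(z)$ of $V$ over $U$ whose first two members restrict to $v_1(0), v_2(0)$. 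In this frame the matrix $\Phi(z)$ has its first two columns divisible by $z$ (since they vanish at $z=0$), so expanding the determinant along these columns shows $z^2 \mid \det\Phi(z)$, i.e. $\ord_x(\det\Phi) \geq 2$. This second argument is shorter and I would likely present it instead, keeping the Smith-normal-form picture as the conceptual explanation.
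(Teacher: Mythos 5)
Your proposal is correct, and the elementary argument you say you would actually present (extend two independent kernel vectors to a holomorphic frame, observe the corresponding two columns of $\Phi(z)$ vanish at $z=0$, hence $z^2 \mid \det\Phi(z)$) is essentially the paper's own proof, which phrases the same computation as $\Phi(z)e_1(z)\wedge\cdots\wedge\Phi(z)e_{2n}(z)=\det(\Phi)(z)\,e_1(z)\wedge\cdots\wedge e_{2n}(z)$ with the left side vanishing to second order. The Smith normal form argument over the local ring is also valid and gives the sharper statement $\ord_x(\det\Phi)\ge\dim\ker\Phi_x$, but it is not needed here.
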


\begin{proof}
Suppose $\dim(\ker(\Phi_x))>1$. Then we may choose two linearly independent vectors $e_1, e_2$ in $\ker(\Phi_x)$ and extend to a basis $e_1, e_2, \ldots, e_{2n}$ for $E_x$. We may choose a local coordinate $z$ centred at $x$ that trivialises $E$ and extend the basis to a local frame $e_1(z), e_2(z), \ldots, e_{2n}(z)$ for $E$. Then, observe that
\[
\Phi(z) e_1(z) \wedge \Phi(z) e_2(z) \wedge \cdots \wedge \Phi(z) e_{2n}(z) = \det(\Phi)(z) \, e_1(z) \wedge \cdots \wedge e_{2n}(z). 
\]
The left-hand-side vanishes to at least second order, and hence, $\ord_x(\det(\Phi)) >1.$
\end{proof}

Since each zero of $a_{2n} = \det(\Phi)$ is simple it follows that $\dim(\ker(\Phi_x)) = 1$ for each $x \in D.$ By Assumption \ref{assumption} we see $a_{2n-2}(x) \ne 0.$ Thus, the zero-generalised eigenspace of $\phi_x$ is three-dimensional, which we denote by $A$. Let $B$ denote the sum of the remaining generalised eigenspaces. By the primary decomposition theorem
\begin{equation}
\label{decomp1}
V_x = A \oplus B.
\end{equation}

Since $Q_x$ pairs opposite eigenvalues and the generalised eigenspaces are $\phi_x$-invariant it follows that the decomposition in (\ref{decomp1}) is $Q_x$-invariant and $\phi_x$-invariant. Thus, the restriction  of $\phi_x$ to $A$ defines a nilpotent homomorphism $\alpha : A \to L_x \otimes A.$ By fixing a basis of $A$ we may identify $\mathfrak{so}(A) \cong \mathfrak{so}_3(\CC).$ Then, since $\alpha$ is nilpotent it follows that $\alpha$ belongs to one of three conjugacy classes, which may be represented by Jordan canonical form. 

\bigskip

First suppose $\alpha = 0_3$. Then, $\phi_x = 0_3 \oplus M$ where $M : B \to L_x \otimes B$ denotes the restriction of $\phi_x$ to $B$. However, we may assume without loss of generality that the first basis element spans $(V_0)_x$. Hence, $\Phi_x = 0_2 \oplus M$ and this implies that $\dim(\ker(\Phi_x))>1$, which is a contradiction. 

Next, suppose that by a suitable choice of basis $e_1, e_2, e_3$
\[
\alpha = \begin{bmatrix} 0 & 1 & 0 \\ 0 & 0 & 0 \\ 0 & 0 & 0 \end{bmatrix}.
\]
Then, $\alpha(e_1) = 0$, $\alpha(e_2) = e_1$, and $\alpha(e_3) = 0.$ However, this implies that $Q_x$ is degenerate. Indeed,
\[
Q_x(e_1, e_2) = Q_x(\alpha(e_2), e_2) = -Q_x(e_2, \alpha(e_2)) = -Q_x(e_1, e_2) 
\]
which shows $Q_x(e_1, e_2) = 0,$ and moreover,
\begin{align*}
Q_x(e_1, e_1) = Q_x(\alpha(e_2), e_1) = -Q_x(e_2, \alpha(e_1)) = 0 \\
Q_x(e_1, e_3) = Q_x(\alpha(e_2), e_3) = -Q_x(e_2, \alpha(e_3)) = 0.
\end{align*}
Therefore, by a suitable choice of basis $\alpha$ is given by
\[
\alpha = \begin{bmatrix} 0 & 1 & 0 \\ 0 & 0 & 1 \\ 0 & 0 & 0 \end{bmatrix}.
\]
Thus, $(V_0)_x = \ker(\phi_x),$ which allows us to prove that $(E, \Phi, \omega)$ defines a $\Sp_{2n}$-Higgs bundle over $X$.

\begin{proposition}
The skew-symmetric form $\omega : E \otimes E \to L$ is non-degenerate over $X$. Hence, $(E, \Phi, \omega)$ defines a $\Sp_{2n}$-Higgs bundle over $X$ whose spectral curve is $S$.  
\end{proposition}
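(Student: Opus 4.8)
\emph{Proof proposal.} Since the skew-symmetry and $\Phi$-compatibility of $\omega$ were already checked in Construction \ref{firstone}, and $\omega$ was shown there to be non-degenerate on $X_0 = X - D$, the only thing left is to verify that $\omega$ is non-degenerate at each $x \in D = (a_{2n})$. The assertion about the spectral curve is then automatic: the characteristic coefficients of $(E, \Phi)$ are holomorphic sections of the $L^{2i}$ on all of $X$ and agree with those of $p(\lambda)$ on the dense open set $X_0$, so $(E,\Phi)$ has characteristic polynomial $p(\lambda)$ on $X$, whence its spectral curve is $S$.

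So fix $x \in D$. From the analysis preceding the proposition we have the $Q_x$- and $\phi_x$-orthogonal primary decomposition $V_x = A \oplus B$ of \eqref{decomp1}, where $A$ is the three-dimensional generalised $0$-eigenspace, $B$ the sum of the remaining generalised eigenspaces, $\alpha := \phi_x|_A$ is conjugate to a single $3\times 3$ Jordan block, and $(V_0)_x = \ker(\phi_x)$ is the line $\ker\alpha \subseteq A$. Correspondingly $E_x = V_x/(V_0)_x = \big(A/(V_0)_x\big) \oplus B$. I claim $\omega_x$ is block-diagonal for this splitting. Indeed, choosing lifts $\tilde v \in A$ and $\tilde w \in B$ of $v \in A/(V_0)_x$ and $w\in B$, we have $\phi_x\tilde v = \alpha\tilde v \in A$ and $\phi_x\tilde w \in B$, and $Q_x(A,B) = 0$, so $\omega_x(v,w) = Q_x(\phi_x\tilde v,\tilde w) = 0$ and $\omega_x(w,v) = Q_x(\phi_x\tilde w,\tilde v) = 0$. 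It therefore suffices to check that $\omega_x$ is non-degenerate on each summand.

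On $B$ the endomorphism $M := \phi_x|_B$ is invertible (all its eigenvalues are nonzero), and $\omega_x(v,w) = Q_x(Mv,w)$ for $v,w\in B$; since $Q_x|_B$ is non-degenerate and $M$ is an isomorphism, this form is non-degenerate. For $A/(V_0)_x$, choose a basis $e_1,e_2,e_3$ of $A$ with $\alpha e_1 = 0$, $\alpha e_2 = e_1$, $\alpha e_3 = e_2$, so $(V_0)_x = \langle e_1\rangle$ and $\bar e_2,\bar e_3$ is a basis of $A/(V_0)_x$. Skew-adjointness of $\alpha$ with respect to $Q_x$ gives $Q_x(e_1,e_1) = Q_x(e_1,e_2) = Q_x(e_2,e_3) = 0$ and $Q_x(e_2,e_2) = -Q_x(e_1,e_3) =: a$, so the Gram matrix of $Q_x|_A$ in this basis is $\left(\begin{smallmatrix} 0 & 0 & -a\\ 0 & a & 0 \\ -a & 0 & b\end{smallmatrix}\right)$ for some $b$, with determinant $-a^3$; non-degeneracy of $Q_x|_A$ forces $a\neq 0$. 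Then $\omega_x(\bar e_i,\bar e_j) = Q_x(\alpha e_i, e_j)$ yields $\omega_x(\bar e_2,\bar e_2) = 0$, $\omega_x(\bar e_2,\bar e_3) = -a$, $\omega_x(\bar e_3,\bar e_2) = a$, $\omega_x(\bar e_3,\bar e_3) = 0$, i.e. the Gram matrix $\left(\begin{smallmatrix} 0 & -a \\ a & 0\end{smallmatrix}\right)$, which is non-degenerate (and, incidentally, re-confirms skew-symmetry). Hence $\omega_x$ is non-degenerate; as $x\in D$ was arbitrary, $\omega$ is non-degenerate on all of $X$, and $(E,\Phi,\omega)$ is a $\Sp_{2n}$-Higgs bundle with spectral curve $S$.

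The main obstacle is the local computation at the singular fibres: one must extract just enough of the Gram matrix of $Q_x$ on the singular block $A$, using skew-adjointness of the regular nilpotent $\alpha$, to see that the lone surviving parameter $a$ is nonzero, and then read off that $\omega_x$ on the two-dimensional quotient $A/(V_0)_x$ is the standard symplectic form scaled by $a$. The clean structural input that isolates this from the rest of the fibre is the $Q_x$- and $\phi_x$-orthogonality of the primary decomposition $V_x = A\oplus B$, which forces $\omega_x$ to be block-diagonal.
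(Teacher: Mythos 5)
Your proof is correct, but it takes a considerably longer route than the paper's. Both arguments rest on the same key input, established in the analysis immediately preceding the proposition: at each $x\in D$ the nilpotent block $\alpha$ is regular, so that $\ker(\phi_x)=(V_0)_x$ is exactly the line being quotiented out. Granting that, the paper finishes in two lines: if $\omega_x(v,w)=0$ for all $w\in E_x$, then $Q_x(\phi_x\tilde v,\tilde w)=0$ for every $\tilde w\in V_x$ (every vector of $V_x$ is a lift of its image in $E_x$), so $\phi_x\tilde v=0$ by non-degeneracy of $Q_x$, hence $\tilde v\in\ker(\phi_x)=(V_0)_x$ and $v=0$. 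In other words, the radical of the pairing $Q_x(\phi_x\,\cdot\,,\cdot)$ on $V_x$ is precisely $\ker(\phi_x)$, which is exactly what $E_x$ quotients out, so no decomposition is needed. Your route --- splitting $E_x$ via the primary decomposition $V_x=A\oplus B$ of \eqref{decomp1}, checking that $\omega_x$ is block-diagonal, using invertibility of $\phi_x|_B$ on one block, and computing the Gram matrices of $Q_x|_A$ and of $\omega_x$ on $A/(V_0)_x$ on the other --- is a correct but more laborious verification of the same fact; your Gram-matrix computations check out. What the extra work buys is explicit information the paper's argument does not produce, namely that $\omega_x$ on the two-dimensional quotient $A/(V_0)_x$ is the standard symplectic form scaled by $a=Q_x(e_2,e_2)\neq 0$. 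Your treatment of the spectral-curve assertion agrees with the paper's, where it is already contained in Construction \ref{firstone}.
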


\begin{proof}
Let $x \in D$ be given. Suppose that $\omega_x(v, w) = 0$ for every $w \in E_x.$ Then, $Q_x(\phi_x(\tilde{v}), \tilde{w}) = 0$ for every $\tilde{w} \in V_x.$ Since $Q_x$ is non-degenerate it follows that $\tilde{v} \in \ker(\phi_x) = (V_0)_x$, i.e., $v = 0.$
\end{proof}

Thus, Construction \ref{firstone} defines a global construction. However, Construction \ref{secondone} does not define a global construction since $Q(v_0, v_0) = a_{2n}$, and hence, $Q_x = 0$ for every $x \in D.$ Contrary to other authors, to extend the construction over $D$ we take advantage of Hecke transformations. This avoids the use of extension classes and simplifies the proofs to be more self-contained. We will characterise $\mathcal{O}_X(V)$ as a subsheaf of $\mathcal{M}_X(L^{-1}E \oplus L^{-n}) := \mathcal{O}_X(L^{-1}E \oplus L^{-n}) \otimes_{\mathcal{O}_X} \mathcal{M}_X$. More specifically, we will allow sections of $V$ to have poles at $D$ such that $\phi$ and $Q$ extends over $D$ holomorphically, and such that $Q$ is non-degenerate and $\phi$-compatible. In what proceeds, let $U_1 = X_0$ and $V_1 := E.$   

\bigskip

Consider the short exact sequence
\[
0 \to V_0 \xrightarrow{\iota} V \xrightarrow{\pr} V_1 \to 0.
\]
Dualising the sequence and using $Q$ to canonically identify $V \cong V^*$ gives
\[
0 \to V_1^* \xrightarrow{\pr^*} V \xrightarrow{\iota^*} V_0^* \to 0.
\]
Let $U_2$ denote the disjoint union of sufficiently small open discs over each point in $D$. By local triviality we can choose a splitting $\tau : (V_0)^*\vert_{U_{2}} \to V\vert_{U_{2}}$, i.e., over $U_2$ the vector bundles $V$ and $V_1^* \oplus V_2^*$ are isomorphic with explicit isomorphism
\[
\psi_2 : (V_1^* \oplus V_0^*) \vert_{U_2} \, \ni (a,b) \mapsto \pr^*(a) + \tau(b) \in V \vert_{U_2}.
\]
Over $U_1$ the form $Q_0 := Q\vert_{V_0}$ is non-degenerate and defines an isomorphism $V_0 \cong V_0^*.$ By non-degeneracy $V_0 \cap V_0^{\perp} = 0$ over $U_1$ and by naturally identifying $V_0^\perp \cong \Ann(V_0) \cong V_1^*$ over $U_1$ we obtain an isomorphism $V \vert_{U_1} \cong (V_1^* \oplus V_0) \vert_{U_1}$ that is given by
\begin{equation}\label{psi1}
\psi_1 : (V_1^* \oplus V_0) \vert_{U_1} \ni (a,b) \mapsto \pr^*(a) + \iota(b) \in V \vert_{U_1}.
\end{equation} 
Hence, we have obtained two isomorphisms for the vector bundle $V$ over two different open sets. Now, we will compute the difference of the isomorphisms over $U_{12} := U_{1} \cap U_2,$ i.e., we will compute
\[
\psi_{21} := \psi_2^{-1} \circ \psi_1 : (V_1^* \oplus V_0) \vert_{U_{12}} \to (V_1^* \oplus V_0^*) \vert_{U_{12}}.
\]
First, consider $\psi_{21}(a,0).$ Since $\psi_1(a,0) = \pr^*(a) = \psi_2(a,0)$ it follows that $\psi_{21}(a,0) = (a,0).$ Now, consider $\psi_{21}(0,b) = \psi_2^{-1}(\iota(b)).$ To understand $\psi_2^{-1}(\iota(b))$ let $u : V_0 \to V_1^*$ be the homomorphism $u(b) = \pr_1(\psi_2^{-1}(\iota(b)))$ where $\pr_1 : V_1^* \oplus V_0^* \to V_1^*$ denotes the naturally projection map. Since $\iota^* \circ \iota = Q_0$ it follows that $\psi_2(0, Q_0(b)) = \iota(b)$. Hence, $\psi_{21}(0,b) = (u(b), Q_0(b))$ and thus,
\begin{equation}\label{trans}
\psi_{21}(a,b) = (a + u(b), Q_0(b)).
\end{equation}
A straightforward calculation shows that the inverse map is given by
\[
\psi_{12}(a,b) = (a-u(Q_0^{-1}(b)), Q_0^{-1}(b)).
\]
The transition maps $\psi_{12}$ and $\psi_{21}$ are defined away from $D$. However, we may extend the maps over $D$ by allowing the sections to have poles. Recall that $(Q_0)_x = 0$ for each $x \in D.$ Hence, $(V_0)_x \subset (V_0^{\perp})_x \cong (V_1^*)_x.$ Moreover, it follows from (\ref{trans}) that $\iota_x = \pr^*_x \circ u_x$, and hence, $u_x : (V_0)_x \to (V_1^*)_x$ is injective for each $x \in D.$ This provides enough machinery to characterise $\mathcal{O}_X(V)$ as a subsheaf of $\mathcal{M}_X(V_1^* \oplus V_0)$.

\begin{proposition}
\label{sheaf}
The sheaf $\mathcal{O}_X(V)$ can be realised as a subsheaf of $\mathcal{M}_X(V_1^* \oplus V_0)$ that has simple poles along $D$ with residues valued in
\[
\Gamma_x = \{(-i_x(w), w) \, \vert \, w \in (V_0)_x\}, \ \ \ x \in D
\]
where $i_x : (V_0)_x \to (V_1^*)_x$ is injective.
\end{proposition}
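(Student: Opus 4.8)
The plan is to transport the locally free sheaf $\mathcal{O}_X(V)$ into the ambient bundle $V_1^*\oplus V_0$ by means of the two trivialisations $\psi_1$ and $\psi_2$ already constructed, and then read off the behaviour at the points of $D$ directly from the transition formula (\ref{trans}) and its inverse. Over $U_1=X_0$ the isomorphism $\psi_1$ identifies $\mathcal{O}_X(V)\vert_{X_0}$ with the honestly holomorphic sheaf $\mathcal{O}_{X_0}(V_1^*\oplus V_0)$, so away from $D$ there is nothing to prove; the entire content is the local statement at each $x\in D$.

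So I would fix $x\in D$, the disc $U_2$ around it, and a local coordinate $z$ centred at $x$ trivialising $V_0$ and $V_0^*$. The key preliminary is that $Q_0=Q\vert_{V_0}$ vanishes to order \emph{exactly} one along $D$: indeed $V_0\cong L^{-n}$ (the zero eigensection $v_0$ extends over $D$ and is nowhere vanishing on $X$, since a zero would force $a_{2n}=Q(v_0,v_0)$ to vanish to order $\ge 2$), and under this identification $Q_0$ is the section $a_{2n}$, whose zeros are simple because $S$ is smooth. Hence in the chosen trivialisations $Q_0:V_0\to V_0^*$ is multiplication by $z$ up to a unit, and $Q_0^{-1}$ has a simple pole along $D$. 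Now a germ of $\mathcal{O}_X(V)$ at $x$ is, via $\psi_2$, a holomorphic germ $(a,b)$ of $V_1^*\oplus V_0^*$, and its image in $\mathcal{M}_X(V_1^*\oplus V_0)$ under $\psi_1^{-1}$ is $\psi_{12}(a,b)=\bigl(a-u(Q_0^{-1}b),\,Q_0^{-1}b\bigr)$, the transition computed just before the statement. Since $Q_0^{-1}$ has a simple pole and $u$ is holomorphic on $U_2$, this germ has at worst a simple pole along $D$, and its residue is $(-u_x(w),\,w)$ with $w:=\Res_x(Q_0^{-1}b)\in (V_0)_x$. Writing $i_x:=u_x$ — which is injective because $\iota_x=\pr_x^*\circ u_x$ (evaluate $\psi_1(0,b)=\psi_2(u(b),Q_0(b))$ at $x$, where $Q_0$ vanishes) and $\iota_x,\pr_x^*$ are injective — this exhibits the image of $\mathcal{O}_X(V)\vert_{U_2}$ inside the subsheaf of $\mathcal{M}_X(V_1^*\oplus V_0)$ of germs with at worst a simple pole along $D$ and residue in $\Gamma_x$.

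For the opposite inclusion I would start from a meromorphic germ $(\gamma,\delta)$ of $V_1^*\oplus V_0$ at $x$ with at worst a simple pole and $\Res_x(\gamma,\delta)=(-i_x(w),w)\in\Gamma_x$, and reconstruct a holomorphic preimage under $\psi_{12}$: put $b:=Q_0(\delta)$, which is holomorphic since $\delta$ has at worst a simple pole and $Q_0$ vanishes to first order, and put $a:=\gamma+u(\delta)$, which is holomorphic since $\gamma$ and $u(\delta)$ have opposite residues $\mp i_x(w)$ at $x$ and hence their sum has no pole. A one-line check using $Q_0^{-1}b=\delta$ gives $\psi_{12}(a,b)=(\gamma,\delta)$, so $(\gamma,\delta)$ is the $\psi_1^{-1}$-image of $\psi_2(a,b)\in\mathcal{O}_X(V)\vert_{U_2}$. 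The local descriptions over $X_0$ and over each $U_2$ agree on the overlaps $U_{12}$ precisely because $\psi_1^{-1}\psi_2=\psi_{12}$ there, so they glue to realise $\mathcal{O}_X(V)$ as the asserted subsheaf of $\mathcal{M}_X(V_1^*\oplus V_0)$.

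I expect the one point requiring care to be the order-of-vanishing bookkeeping in the second paragraph: everything hinges on $Q_0$ (equivalently $a_{2n}$) having zeros of order exactly one, so that $Q_0^{-1}$ contributes \emph{simple} poles and the residues cancel exactly in the passage $(\gamma,\delta)\mapsto(a,b)$; without the smoothness of $S$ this would fail. The remaining ingredients — holomorphy of $u$ on $U_2$, injectivity of $i_x$, and the identity $\psi_1^{-1}\psi_2=\psi_{12}$ — are immediate from the constructions already in place, so after that the argument is a direct manipulation of (\ref{trans}) and its inverse.
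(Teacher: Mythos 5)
Your argument is correct and follows essentially the same route as the paper: pass through $\psi_2$ and the transition map $\psi_{12}$, use the canonical section $v_0$ to identify $Q_0$ with $a_{2n}$ (simple zeros since $S$ is smooth) so that $Q_0^{-1}$ contributes exactly simple poles, and read off the residue $(-u_x(w),w)$ with $i_x=u_x$ injective from $\iota_x=\pr_x^*\circ u_x$. Your explicit check of the reverse inclusion and of the nowhere-vanishing of $v_0$ over $D$ only spells out what the paper leaves implicit.
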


\begin{proof}
Recall that $V \cong V_1^* \oplus V_0$ away from $D$ with isomorphism given by $\psi_1$ in (\ref{psi1}), and since $V_0 \subset V$ is a degenerate subbundle with respect to $Q$ the isomorphism does not extend over $D$. However, allowing for poles over $D$ we may use $\psi_1^{-1}$ to realise $\mathcal{O}_X(V)$ as a subsheaf of $\mathcal{M}_X(V_1^* \oplus V_0)$. Since $\psi_1^{-1} = \psi_{12} \circ \psi_2^{-1}$, and over $U_2$ the map $\psi_2^{-1}$ is an isomorphism it suffices to characterise the subsheaf
\[
\psi_{12} \, \mathcal{O}_X(V_1^* \oplus V_0^*) \subset \mathcal{M}_X(V_1^* \oplus V_0).
\]
Fix $x \in D$ and choose a local coordinate $z$ centred at $x$ that trivialises $L$. Since $a_{2n}$ has a simple zero at $x$ we may assume without loss of generality that $a_{2n} = z.$ Moreover, recall that there is a canonical section $v_0$ of $L^n \otimes V_0$ such that $Q(v_0, v_0) = z.$ The section $v_0$ defines an isomorphism $V_0 \cong L^{-n}$ away from $x$, and it follows that in this trivialisation $Q_0^{-1} = \frac{1}{z}.$ Therefore, given a local holomorphic section $(a(z), b(z))$ of $V_1^* \oplus V_0$ we see 
\[
\psi_{12}(a(z), b(z)) = \left ( a(z) - \frac{u(z)b(z)}{z}, \frac{u(z)b(z)}{z} \right).
\]
Since $u(0) = u_x$ is injective the result follows. 
\end{proof}

It follows from Proposition \ref{sheaf} that if $(V, \phi, Q)$ is a generic $\SO_{2n+1}$-Higgs bundle with induced $\Sp_{2n}$-Higgs bundle $(E, \Phi, \omega)$, then $\mathcal{O}_X(V)$ may be realised as the subsheaf of $\mathcal{M}_X(L^{-1}E \oplus L^{-n})$ whose sections are holomorphic away from $D = (\det(\Phi))$ with simple poles along $D$ whose residues belong to $\Gamma_x = \{(-i_x(w), w) \, \vert \, w \in L_x^{-n}\}$ for each $x \in D$ where $i_x : L_x^{-n} \to L_x^{-1}E_x$ is injective. Note that we have used the symplectic form $\omega : E \otimes E \to L$ to identify $E^* \cong L^{-1}E.$

\bigskip

Now, we extend the reverse construction over the singularities using Hecke modifications following Abe \cite{MR2448083}. Suppose $(E, \Phi, \omega)$ is a $\Sp_{2n}$-Higgs bundle with characteristic polynomial $p(\lambda)$, and let $(V, \phi, Q)$ be the $\SO_{2n+1}$-Higgs bundle defined over $X_0$ obtained by Construction \ref{secondone}. We may extend $V$ over $D$ by considering $\mathcal{O}_X(V)$ as a subsheaf of $\mathcal{M}_X(L^{-1}E \oplus L^{-n})$ comprised of sections that are holomorphic away from $D$ with simple poles along $D$ whose residues are valued in $\Gamma_x = \{(-i_x(w), w) \, \vert \, w \in L_x^{-n}\}$ for each $x \in D$ where $i_x : L_x^{-n} \to L_x^{-1}E_x$ is injective. The homomorphisms $\{i_x\}_{x \in D}$ characterise the vector bundle $V$ completely. Moreover, for $(V, \phi, Q)$ to define a $\SO_{2n+1}$-Higgs bundle over $X$ we require $\phi$ and $Q$ to extend holomorphically over $D$ such that $Q$ is $\phi$-compatible. Thus, we will determine the family of homomorphisms $\{i_x\}_{x \in D}$ that give our desired extension. Of course, we also require $V$ to have trivial determinant and an orientation. For the orientation there are two choices. However, the automorphism, $-\id$, exchanges the two orientations, so the orientation is unique up to isomorphism. 

\begin{lemma}
The holomorphic vector bundle $V$ has trivial determinant.
\end{lemma}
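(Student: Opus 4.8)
The plan is to compute $\det V$ locally away from $D$ and then check that the Hecke modification at each point of $D$ does not change the determinant. Away from $D$, Construction \ref{secondone} gives $V\vert_{X_0} \cong L^{-1}E \oplus L^{-n}$, so $\det(V)\vert_{X_0} \cong \det(L^{-1}E)\otimes L^{-n} \cong L^{-2n}\det(E)\otimes L^{-n} = L^{-3n}\det(E)$. Now $(E,\Phi,\omega)$ is a $\Sp_{2n}$-Higgs bundle, and the symplectic form $\omega$ gives an isomorphism $E^* \cong L^{-1}E$ (as used at the end of Section \ref{2}); taking determinants yields $\det(E)^{-1} \cong L^{-2n}\det(E)$, hence $\det(E)^2 \cong L^{2n}$, i.e. $\det(E)\cong L^n$ after possibly accounting for $2$-torsion. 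In fact one gets $\det(E)\cong L^n$ on the nose: the symplectic form identifies $\det E$ with $\det E^* \otimes L^{2n}$ canonically, and comparing with the Pfaffian trivialisation $\Wedge^{2n}E \cong L^n$ coming from $\omega$ (since $\omega \in \HH^0(X,\Wedge^2 E^* \otimes L)$ non-degenerate gives $\omega^{\wedge n} \in \HH^0(X,\Wedge^{2n}E^*\otimes L^n)$ nowhere vanishing) shows $\det(E)\cong L^n$. Therefore $\det(V)\vert_{X_0} \cong L^{-3n}\otimes L^n = L^{-2n}$... which is not trivial, so the Hecke modification at $D$ must correct this.

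The key point is then the local computation at each $x\in D$. Fix $x\in D$ with local coordinate $z$ trivialising $L$ and $a_{2n}=z$, as in the proof of Proposition \ref{sheaf}. In the trivialisation $v_0$ of $L^{-n}$ the sheaf $\mathcal{O}_X(V)$ is, by Proposition \ref{sheaf} together with its reverse-construction counterpart, generated over $\mathcal{O}_{X,x}$ by a holomorphic frame of $L^{-1}E$ together with the single meromorphic section $\tfrac{1}{z}(-i_x(v_0), v_0)$ obtained from a holomorphic generator $v_0$ of $L^{-n}$ with a simple pole. Thus passing from the trivial extension $L^{-1}E\oplus L^{-n}$ to $V$ replaces one local generator by $\tfrac{1}{z}$ times a generator, so $\det V$ acquires a pole of order exactly $1$ at $x$ relative to $\det(L^{-1}E\oplus L^{-n})$. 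Summing over the $\deg(a_{2n}) = 2n\deg L$ points of $D$ (each zero of $a_{2n}$ simple, by the lemma preceding Proposition \ref{sheaf}), $\det V \cong \det(L^{-1}E\oplus L^{-n})\otimes \mathcal{O}_X(D)$; since $\mathcal{O}_X(D) \cong L^{2n}$ as $D=(a_{2n})$ with $a_{2n}\in\HH^0(X,L^{2n})$, we get $\det V \cong L^{-2n}\otimes L^{2n} \cong \mathcal{O}_X$.

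The main obstacle is bookkeeping the twists: one must be careful that the ``residue in $\Gamma_x$'' description in Proposition \ref{sheaf} is precisely a Hecke modification of the trivial extension at $D$ that raises the determinant degree by exactly $1$ at each point (and not, say, by $2$ or $0$), and that the identification $\mathcal{O}_X(D)\cong L^{2n}$ is the correct one — i.e. that the pole locus really has multiplicity one at each point and total degree $\deg(L^{2n})$. Once the local normal form from the proof of Proposition \ref{sheaf} is in hand this is a direct determinant-of-a-modified-lattice computation; I would write it by choosing, near each $x\in D$, the local frame $(e_1,\dots,e_{2n},\tfrac1z(-i_x(v_0),v_0))$ of $V$ versus the frame $(e_1,\dots,e_{2n},v_0)$ of the trivial extension, where $(e_i)$ is a local frame of $L^{-1}E$, and reading off that the change-of-frame determinant is $\tfrac1z$ up to a unit. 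Assembling these local contributions with the global isomorphism $\det(V)\vert_{X_0}\cong L^{-2n}$ established above completes the proof.
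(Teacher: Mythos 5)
Your argument is correct, but it is not the route the paper takes. The paper's proof is a two-line affair: by construction $V$ sits in a short exact sequence $0 \to L^{-n} \to V \to E \to 0$ (the line subbundle being the kernel of $\phi$, with quotient $E$), so $\det(V) \cong L^{-n}\otimes\det(E)$, and then the same observation you make — that $\omega^n$ is a nowhere vanishing section of $\det(E)^*\otimes L^n$, hence $\det(E)\cong L^n$ — finishes it. You instead compute the determinant of the Hecke modification directly: $\mathcal{O}_X(V)$ contains $\mathcal{O}_X(L^{-1}E\oplus L^{-n})$ with torsion quotient of length one at each point of $D$ (the residue space $\Gamma_x$ is a line, and the local change of frame has determinant $1/z$ up to a unit), so $\det(V)\cong\det(L^{-1}E\oplus L^{-n})\otimes\mathcal{O}_X(D)\cong L^{-2n}\otimes L^{2n}\cong\mathcal{O}_X$. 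What your route buys is self-containedness within the Hecke picture: you never need to identify the quotient $V/L^{-n}$ with $E$, which in the reverse construction is the one point of the paper's argument that deserves a word of justification; what it costs is the local lattice bookkeeping. Two small points to tidy: the simplicity of the zeros of $a_{2n}$ comes from smoothness of $S$ (stated in the text before Assumption \ref{assumption}), not from the lemma preceding Proposition \ref{sheaf}, which concerns $\dim\ker(\Phi_x)$; and your aside that $\det(V)\vert_{X_0}\cong L^{-2n}$ ``is not trivial'' should be read as a statement about the naive extension $L^{-1}E\oplus L^{-n}$ over all of $X$ — over $X_0$ itself $L^{-2n}$ is trivialised by $a_{2n}$, which is consistent with your final answer.
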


\begin{proof}
By construction the holomorphic vector bundle $V$ fits into a short exact sequence
\[
0 \to L^{-n} \to V \to E \to 0.
\]
Thus, $\det(V) \cong L^{-n} \otimes \det(E).$ Further, since the symplectic form $\omega : E \otimes E \to L$ is non-degenerate, $\omega^n$ defines a nowhere vanishing section of $\det(E)^* \otimes L^n.$ Therefore, $\det(E) \cong L^n$, and hence, $\det(V) \cong \mathcal{O}_X.$
\end{proof}

Hence, we only need to modify $i_x$ so that $\phi$ and $Q$ extend holomorphically over $D$ such that $Q$ is non-degenerate. 

\begin{lemma}
\label{extendphi}
The Higgs field $\phi$ extends holomorphically over $D$ if and only if $i_x(L_x^{-n}) = \ker(\Phi_x)$ for each $x \in D.$ 
\end{lemma}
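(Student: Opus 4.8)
The plan is to reduce to a local statement at each point of $D$. Over $X_0$ the Higgs field is the holomorphic map $(v,w)\mapsto(\Phi v,0)$ of Construction \ref{secondone}, and passing to the extended sheaf only changes $\mathcal{O}_X(V)$ inside small discs around the points of $D$; so ``$\phi$ extends holomorphically over $D$'' means precisely that, for each $x\in D$, this same formula sends germs of the extended $\mathcal{O}_X(V)$ at $x$ into germs of the extended $\mathcal{O}_X(L\otimes V)$ at $x$. Accordingly I would fix $x\in D$, choose a local coordinate $z$ centred at $x$, a trivialisation of $L$, and a frame $f_1,\dots,f_{2n}$ of $E$. In these coordinates $\Phi$ becomes a holomorphic matrix $\Phi(z)$ with $\ord_x\det\Phi(z)=1$, so $\dim\ker\Phi_x=1$ as already noted; and the injective map $i_x\colon L_x^{-n}\to L_x^{-1}E_x$ becomes a nonzero vector $v_\ast\in\CC^{2n}$, so that the residue subspace of Proposition \ref{sheaf} is $\Gamma_x=\{(-\nu v_\ast,\nu):\nu\in\CC\}$. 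A short unwinding of the trivialisations shows the residue subspace attached to the extended $L\otimes V$ at $x$ is, after the induced identifications, the same subspace $\{(-\nu v_\ast,\nu)\}$.

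By Proposition \ref{sheaf}, every germ of $\mathcal{O}_X(V)$ at $x$ then has the form $s(z)=\bigl(f(z)-\tfrac{\mu}{z}v_\ast,\ g(z)+\tfrac{\mu}{z}\bigr)$ with $f,g$ holomorphic and $\mu\in\CC$, so $\phi(s)(z)=\bigl(\Phi(z)f(z)-\tfrac{\mu}{z}\Phi(z)v_\ast,\ 0\bigr)$. The summand $(\Phi(z)f(z),0)$ is always a holomorphic section of $L\otimes V$, so the whole issue is the term $\tfrac{\mu}{z}\Phi(z)v_\ast$.

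If $i_x(L_x^{-n})=\ker\Phi_x$, i.e.\ $\Phi(0)v_\ast=0$, then $\Phi(z)v_\ast=(\Phi(z)-\Phi(0))v_\ast$ is divisible by $z$, hence $\phi(s)$ is holomorphic and in particular a germ of the extended $L\otimes V$; so $\phi$ extends. For the converse I would test the single germ $s=(-v_\ast/z,\ 1/z)$ (take $\mu=1$, $f=g=0$), which belongs to $\mathcal{O}_X(V)$ because its residue $(-v_\ast,1)$ lies in $\Gamma_x$; then $\phi(s)$ has polar part $-\tfrac1z\Phi(0)v_\ast$ in the first slot and $0$ in the second, so its residue is $(-\Phi(0)v_\ast,0)$. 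The only element of $\Gamma_x$ with vanishing second component is $0$, so $\phi(s)$ lies in the extended $L\otimes V$ only if $\Phi(0)v_\ast=0$. Since $i_x$ is injective and $\dim\ker\Phi_x=1$, the condition $\Phi(0)v_\ast=0$ is equivalent to $i_x(L_x^{-n})=\ker\Phi_x$; running this over every $x\in D$ yields both implications.

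I expect the only delicate point to be the twist bookkeeping in the first step: one must make sure that, after using $\omega$ to identify $E^\ast\cong L^{-1}E$ and the canonical section $v_0$ to trivialise $V_0\cong L^{-n}$, the extended sheaf $L\otimes V$ really carries residues in the same $\Gamma_x$ as $V$, so that a nonzero first-slot residue of $\phi(s)$ genuinely obstructs holomorphicity rather than being reabsorbed. Everything else follows directly from the explicit transition maps behind Proposition \ref{sheaf}.
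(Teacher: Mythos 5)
Your proposal is correct and follows essentially the same route as the paper: fix $x\in D$, trivialise locally, apply $\phi$ to a germ with simple pole and residue in $\Gamma_x$, observe the only problematic term is $-\tfrac{1}{z}\Phi(z)i(z)w$, and conclude the extension exists iff $\Phi_x i_x=0$, which by injectivity of $i_x$ and $\dim\ker\Phi_x=1$ is the stated condition. Your extra observation that a residue with vanishing second component cannot lie in $\Gamma_x$ (so a first-slot pole cannot be absorbed into the extended sheaf $L\otimes V$) is a slightly more careful justification of a step the paper leaves implicit, but it is the same argument.
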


\begin{proof}
Fix $x \in D.$ Choose a local coordinate $z$ of $X$ centred at $x$. We extend the homomorphism $i_x : L_x^{-n} \to L_x^{-1}E_x$ to a local section $i(z)$ of $\Hom(L^{-n}, L^{-1}E).$ Let $w(z)$ be a local section of $L^{-n}$ and consider
\[
\phi(z)\left ( -\frac{i(z)w(z)}{z}, \frac{w(z)}{z} \right ) = \left ( -\frac{1}{z}\Phi(z)i(z)w(z), 0 \right ).
\]
Hence, $\phi(z)$ extends holomorphically over $z = 0$ if and only if $\Phi_x(i_x(w)) = 0$ where $w = w(0).$ In other words, $\phi(z)$ extends holomorphically over $x$ if and only if $i_x(L_x^{-n}) = \ker(\Phi_x).$
\end{proof}

Lemma \ref{extendphi} determines the homomorphisms $\{i_x\}$ up to scale. Now, we choose $\{i_x\}$ such that $i_x(L_x^{-n}) = \ker(\Phi_x)$ for each $x \in D.$ Now, we will appropriately scale the $i_x$ so that $Q$ extends holomorphically over $D$. If both entries in $Q$ are holomorphic, then the output will be holomorphic. Hence, we will check the cases when one input is holomorphic and the other is meromorphic, and for when both entries are meromorphic. We adopt the notation used in the proof of Lemma \ref{extendphi}. Consider the following
\begin{align}
\label{holomero}
\begin{split}
Q \left ( \left ( \frac{-i(z)w(z)}{z}, \frac{w(z)}{z} \right ), (-i(z)w(z),w(z)) \right )  \\
= \frac{1}{z}\left (\omega(i(z)w(z), \Phi(z)i(z)w(z)) + w(z)^2 a_{2n}(z) \right ).
\end{split}
\end{align}
Since $a_{2n}(z)$ has a simple zero at $z = 0$ and $\Phi_x(i_x(w)) = 0$ it follows that (\ref{holomero}) is holomorphic. Now, consider 
\begin{align}
\label{meromero}
\begin{split}
Q \left ( \left ( \frac{-i(z)w(z)}{z}, \frac{w(z)}{z} \right ),  \left ( \frac{-i(z)w(z)}{z}, \frac{w(z)}{z} \right ) \right )  \\
= \frac{1}{z^2}\omega(i(z)w(z), \Phi(z)i(z)w(z)) + \frac{w(z)^2}{z^2} a_{2n}(z).
\end{split}
\end{align}
By writing $\Phi(z) = \Phi_x + z\Phi_x' + z^2A(z)$ where $A(z)$ is a holomorphic function and $a_{2n}(z) = za_{2n}'(0) + z^2B(z)$ where $B(z)$ is a holomorphic function it follows that (\ref{meromero}) simplifies to
\begin{align*}
\begin{split}
Q \left ( \left ( \frac{-i(z)w(z)}{z}, \frac{w(z)}{z} \right ),  \left ( \frac{-i(z)w(z)}{z}, \frac{w(z)}{z} \right ) \right )  \\
= \frac{1}{z}\left( \omega(i_x w, \Phi_x'i_xw) + w^2a_{2n}'(0) \right ) \mod \mathcal{O}_x.
\end{split}
\end{align*}
Therefore, $Q$ extends holomorphic over $D$ if and only if 
\begin{equation}
\label{condition}
\omega(i_x w, \Phi_x'i_xw) = -w^2a_{2n}'(0)
\end{equation} 
for each $x \in D.$

\begin{proposition}
\label{Qextends}
We may choose $\{i_x\}$ so that $(\ref{condition})$ holds. In other words, we may choose $\{i_x\}$ so that $Q$ extends holomorphically over $D$.
\end{proposition}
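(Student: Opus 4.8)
The plan is to reduce the statement, one point of $D$ at a time, to the scalar non-vanishing $\omega(i_x w, \Phi_x' i_x w) \ne 0$, and then to establish this by a short determinant argument. By Lemma \ref{extendphi} we are forced to take $i_x$ with $i_x(L_x^{-n}) = \ker(\Phi_x)$, and any two such homomorphisms differ by a nonzero scalar. So fix one admissible choice, fix a local generator $w$ of $L_x^{-n}$, and put $v_0 := i_x(w)$, a generator of $\ker(\Phi_x)$. Replacing $i_x$ by $c\, i_x$ multiplies the left-hand side of (\ref{condition}) by $c^2$, while its right-hand side $-w^2 a_{2n}'(0)$ is a nonzero scalar because $x$ is a simple zero of $a_{2n} = \det \Phi$. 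Since $\CC$ is algebraically closed, the required $i_x$ exists at $x$ precisely when $\omega(v_0, \Phi_x' v_0) \ne 0$, so this is all that must be proved.

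To prove it, suppose $\omega(v_0, \Phi_x' v_0) = 0$. Since $a_{2n} = \det \Phi$ has only simple zeros, $\dim \ker(\Phi_x) = 1$ (as already noted), so $\ker(\Phi_x) = \CC v_0$; and because $\omega$ is $\Phi$-compatible and non-degenerate, $\Phi_x$ is $\omega_x$-skew, whence $\im(\Phi_x) = (\ker \Phi_x)^{\perp_\omega} = \{\, v \in E_x : \omega(v_0, v) = 0 \,\}$. Thus $\omega(v_0, \Phi_x' v_0) = 0$ forces $\Phi_x' v_0 \in \im(\Phi_x)$, say $\Phi_x' v_0 = \Phi_x u$ for some $u \in E_x$. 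Choose a local coordinate $z$ centred at $x$ trivialising $L$ and $E$, write $\Phi(z) = \Phi_x + z \Phi_x' + O(z^2)$, and consider the local section $t(z) := v_0 - z u$. Then $\Phi(z) t(z) = \Phi_x v_0 + z(\Phi_x' v_0 - \Phi_x u) + O(z^2) = O(z^2)$.

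Now $t(0) = v_0 \ne 0$, so near $x$ we may extend $t$ to a local frame $t(z), e_2(z), \dots, e_{2n}(z)$ of $E$ and write
\[
\Phi(z) t(z) \wedge \Phi(z) e_2(z) \wedge \cdots \wedge \Phi(z) e_{2n}(z) = \det(\Phi(z)) \cdot \big( t(z) \wedge e_2(z) \wedge \cdots \wedge e_{2n}(z) \big).
\]
The left-hand side vanishes to order at least $2$ at $x$, since $\Phi(z) t(z) = O(z^2)$, while $t(z) \wedge e_2(z) \wedge \cdots \wedge e_{2n}(z)$ is a local frame for $\det E$; therefore $\det(\Phi(z)) = O(z^2)$. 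But $\det \Phi = a_{2n}$ has only simple zeros, a contradiction. Hence $\omega(v_0, \Phi_x' v_0) \ne 0$ for every $x \in D$, and the reduction of the first paragraph finishes the proof.

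The genuinely substantive step is the last one — observing that $\omega(v_0, \Phi_x' v_0) = 0$ would give $\Phi(z)$ a kernel direction to second order and hence an excess vanishing of $\det \Phi$; everything else is bookkeeping, in particular keeping track of the line-bundle weights in (\ref{condition}) so that the equation really is a solvable quadratic $c^2 = (\text{nonzero scalar})$. If one prefers to stay on the spectral curve, the same non-vanishing follows by expanding a local frame $v(\lambda) = v_0 + \lambda v_1 + \cdots$ of the eigenline bundle along the branch of $S$ through $(x,0)$, using the simple ramification of $\pi$ there (this is where Assumption \ref{assumption} enters, via $a_{2n-2}(x) \ne 0$) to write $z = \kappa \lambda^2 + O(\lambda^4)$ with $\kappa \ne 0$, and then computing $\kappa\, \omega(v_0, \Phi_x' v_0) = \omega(v_0, v_1) = \omega(v_0, v_1')$, where $v_0, v_1'$ span the two-dimensional generalised $0$-eigenspace of $\Phi_x$; that last pairing is nonzero because the generalised $0$-eigenspace is $\omega_x$-orthogonal to all the others and hence inherits a non-degenerate symplectic form.
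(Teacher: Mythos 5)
Your proof is correct, but it takes a genuinely different route from the paper. You reduce (\ref{condition}) to the single scalar non-vanishing $\omega_x(i_x w,\Phi_x' i_x w)\ne 0$ (legitimate, since rescaling $i_x\mapsto c\,i_x$ scales the left side by $c^2$ while the right side $-w^2a_{2n}'(0)$ is nonzero by simplicity of the zeros of $a_{2n}$), and you prove the non-vanishing by a second-order refinement of the paper's kernel-dimension lemma: if $\omega_x(v_0,\Phi_x'v_0)=0$ then, since $\im(\Phi_x)=(\ker\Phi_x)^{\perp_\omega}$ by $\omega$-skewness and $\dim\ker\Phi_x=1$, one gets $\Phi_x'v_0=\Phi_x u$, and the section $t(z)=v_0-zu$ is annihilated by $\Phi(z)$ to order $2$, forcing $\ord_x\det\Phi\ge 2$ and contradicting the simplicity of the zero of $a_{2n}=\det\Phi$; all steps check out (one could even phrase it via the Jacobi/adjugate formula, $a_{2n}'(0)$ being proportional to $\omega_x(v_0,\Phi_x'v_0)$ when the kernel is one-dimensional). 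Notably, your argument does not use Assumption \ref{assumption} at all, whereas the paper's proof works inside the rank-$2$ subbundle $E_0$ cut out by $\Phi^2+\alpha\ell^2$, chooses the frame $e_1,e_2$, computes $\alpha'(0)=a_{2n}'(0)/a_{2n-2}(0)$ and $\Phi_x'(i_x\ell_x^{-n})$ explicitly, and reduces (\ref{condition}) to the normalisation $u^2\omega_x(e_1,e_2)=\ell_x a_{2n-2}(0)$. What the paper's more computational route buys is precisely this explicit normalisation ((\ref{requirement}), later promoted to (\ref{QQ}) and to $\omega_x(i_x,j_x)=a_{2n-2}(x)$), which is reused afterwards to prove non-degeneracy of $Q$ at $D$ and to encode $\{i_x\}$ as a square root of $\theta$ in the spectral data; your argument establishes existence more economically and under weaker hypotheses, but does not by itself supply those later formulas. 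Your closing sketch on the spectral curve is essentially the paper's computation and is fine as a remark, though only the determinant argument is fully carried out.
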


\begin{proof}
Fix $x \in D$. Let $(U, z)$ be local coordinates centred at $x$ where $U$ is biholomorphic to a disc. Also, let $\ell := \ell(z)$ be a trivialising section of $L$ over $U.$ Locally, the eigenvalues of $\Phi(z)$ are given by $\pm \lambda_1(z) \ell, \ldots, \pm \lambda_n(z) \ell.$ Since $a_{2n-2}(0) \ne 0$ the zero generalised eigenspace of $\Phi_x$ is two-dimensional, and we may assume without loss of generality that $\lambda_i(0) = 0$ if and only if $i=1$. We may factor the characteristic polynomial as
\[
p(\lambda, z) = (\lambda^2 + \alpha)\,\hat{p}(\lambda, z)
\]
where $\alpha = -\lambda_1(z)^2$ and $\hat{p}(\lambda, z) := (\lambda^2 - \lambda_2(z)^2) \cdots (\lambda^2 - \lambda_n(z)^2).$ Let $\beta = (-1)^{n-1}\lambda_2(z)^2\cdots \lambda_n(z)^2.$ Then, $\alpha \beta = a_{2n}$ and $\beta(0) = a_{2n-2}(0),$ and it follows that 
\begin{equation}
\label{alphadiff}
\alpha'(0) = \frac{a_{2n}'(0)}{a_{2n-2}(0)}.
\end{equation}
Let $E_0 \subset E\vert_U$ be the subbundle induced by the kernel of $\Phi(z)^2 + \alpha(z)\ell^2.$ Notice that $(E_0)_x$ is the zero generalised eigenspace of $\Phi_x$, which is two-dimensional, so $E_0$ has rank 2. Since $\ker(\Phi_x) \subset (E_0)_x$, let $e_1, e_2$ be a basis for $(E_0)_x$ such that $\Phi_x e_1 = 0$ and $\Phi_x e_2 = e_1 \ell_x.$ Extend $e_2$ to a nowhere vanishing section $e_2(z)$ and set $e_1(z) := \ell^{-1} \Phi(z)e_2(z).$ The sections $e_1(z), e_2(z)$ form a frame for $E_0.$ By definition of $E_0$
\[
\Phi(z)e_1(z) = \ell^{-1} \Phi(z)^2e_2(z) = -\ell \alpha(z) e_2(z).
\]
Thus, in the frame
\[
\Phi(z) = \begin{bmatrix}
0 & \ell \\  -\ell \alpha(z) & 0 
\end{bmatrix}.
\]
Recall that $i_x(L_x^{-n}) = \ker(\Phi_x)$ so $i_x(\ell_x^{-n}) = ue_1\ell_x^{-1}$ for some $u \ne 0.$ We may extend $i_x$ to a local section of $L^{n-1}E$ by defining $i(z) := u \ell^{n-1} e_1(z).$ Hence, $i(z)\ell^{-n} = u\ell^{-1}e_1(z)$, and thus,
\begin{equation}
\label{yes}
\Phi(z)i(z)\ell^{-n} = \begin{bmatrix}
0 & \ell \\  -\ell \alpha(z) & 0 
\end{bmatrix} 
\begin{bmatrix}
u\ell^{-1} \\ 0
\end{bmatrix} = 
\begin{bmatrix}
0 \\ -u \alpha(z)
\end{bmatrix}.
\end{equation}
Differentiating (\ref{yes}) and evaluating at $z=0$ it follows from (\ref{alphadiff}) that \begin{equation}\label{formula} \Phi'_x(i_x(\ell_x^{-n})) = -\frac{u a_{2n}'(0)}{a_{2n-2}(0)}e_2.\end{equation} Now, wish wish to modify $i_x$ so that
\[
\omega_x(i_x \ell_x^{-n}, \Phi_x' i_x \ell_x^{-n}) = -a_{2n}'(0).
\]
By substituting in (\ref{formula}) and $i_x \ell_x^{-n} = u \ell_x^{-1} e_1$ it follows that we require $i_x$ to satisfy
\begin{equation}
\label{requirement}
u^2\omega_x(e_1, e_2) = \ell_x a_{2n-2}(0).
\end{equation}
Since $\omega_x(e_1, e_2) \ne 0$ and $\ell_x a_{2n-2}(0) \ne 0$ we may choose $u$ such that $(\ref{requirement})$ holds.
\end{proof}

\begin{remark}
The condition in (\ref{requirement}) can be reformulated in coordinate-free terms. Indeed, from $i_x(L_x^{-n}) = \ker(\Phi_x)$ we see $\Phi_x i_x = 0.$ Moreover, we may choose a vector $j_x \in L_x^{n-2}E_x$ such that $\Phi_x j_x = i_x$, e.g., $j_x = \ell_x^{n-2}e_2.$ The vector $j_x$ is unique up to $\ker(\Phi_x),$ and condition (\ref{requirement}) becomes
\[
\omega_x(i_x, j_x) = a_{2n-2}(x)
\] 
where $j_x \in L_x^{n-2} \otimes \ker(\Phi_x^2)/\ker(\Phi_x)$ satisfies $\Phi_xj_x = i_x.$
\end{remark}

Therefore, we may choose $\{i_x\}$ so that $Q$ extends holomorphically over $D$. Note that from Proposition \ref{Qextends} the choice of $i_x$ is unique up to sign. We may extend this condition to sections by decreeing that \[u^2\omega(e_1(z), e_2(z)) = \ell a_{2n-2}(z).\] By replacing $ue_1(z)$ and $ue_2(z)$ with $e_1(z)$ and $e_2(z)$ respectively we may assume that
\begin{equation}
\label{QQ}
\omega(e_1(z), e_2(z)) = \ell a_{2n-2}(z).
\end{equation} 
We are left to show that $Q_x$ is non-degenerate for each $x \in D.$ 

\bigskip

To prove that $Q_x$ is non-degenerate we will compute the determinant in a local frame. Fix $x \in D$. We adopt all notation from the proof of Proposition \ref{Qextends} and we impose condition (\ref{formula}). Let $E_1 \subset E\vert_U$ be the subbundle induced by the kernel of $\hat{p}(\Phi(z), z).$ Since $(E_1)_x$ is the sum of the non-zero generalised eigenspaces we see that $(E_1)_x$ has dimension $2n-2,$ and hence, $E_1$ has rank $2n-2.$ By the Cayley-Hamilton theorem $E\vert_U = E_0 \oplus E_1.$ The composition is $\Phi$-invariant and  orthogonal with respect to $\omega$ since $\omega$ pairs opposite eigenvalues, and each generalised eigenspace is invariant under $\Phi.$ Under this decomposition notice that
\[
(L^{-1}E \oplus L^{-n}) \vert_U = (L^{-1}\vert_U E_0 \oplus L^{-n} \vert_U) \oplus L^{-1} \vert_U E_1.
\] 
Moreover, since $i_x(L_x^{-n}) = \ker(\Phi_x) \subset L^{-1}_x(E_0)_x$ notice that \[\Gamma_x \subset L^{-1}_x(E_0)_x \oplus L_x^{-n}.\] Hence, set $V_1 = L^{-1}\vert_U E_1$ and let $V_0$ be the holomorphic vector bundle whose sheaf of holomorphic sections is the subsheaf \[\mathcal{O}_U(V_0) \subset \mathcal{M}_U(L^{-1} \vert_U E_0 \oplus L^{-n} \vert_U)\] comprised of sections that are holomorphic away from $x$ with a simple pole at $x$ with residue valued in $\Gamma_x$. Then, we obtain a decomposition $V \vert_U \, = V_0 \oplus V_1.$ We claim that this decomposition is orthogonal with respect to $Q$. By continuity it suffices the prove that the decomposition $V_y = (V_0)_y \oplus (V_1)_y$ is orthogonal with respect to $Q_y$ where $y \ne x.$ Let $(u,v) \in (V_0)_y$ and $w \in (V_1)_y.$ By the polarisation identity it follows that
\[
Q_y((u,v), w) = \omega_y(u, \Phi_yw).
\]
Since the decomposition $E_y = (E_0)_y \oplus (E_1)_y$ is $\Phi_y$-invariant and orthogonal with respect to $\omega_y$ it follows that $Q_y((u,v), w) = 0$, which proves the claim. Therefore, we may decompose $Q\vert_U \, = Q_0 \oplus Q_1$ where $Q_i : V_i \otimes V_i \to \mathcal{O}_U.$

\begin{proposition}
The form $Q_x : V_x \otimes V_x \to \CC$ is non-degenerate for each $x \in D.$
\end{proposition}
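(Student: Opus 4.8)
The plan is to use the $\omega$-orthogonal splitting $V|_U = V_0 \oplus V_1$, $Q|_U = Q_0 \oplus Q_1$ established above, and to check separately that $(Q_0)_x$ and $(Q_1)_x$ are non-degenerate; non-degeneracy of $Q_x = (Q_0)_x \oplus (Q_1)_x$ is then immediate.

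\emph{The factor $Q_1$.} On $V_1 = L^{-1}|_U E_1$ one has $Q_1\big((v,0),(v',0)\big) = \omega(v,\Phi v')$, which is symmetric by the $\Phi$-compatibility of $\omega$. Since $(E_1)_x$ is the sum of the \emph{nonzero} generalised eigenspaces of $\Phi_x$, the restriction $\Phi_x\colon (E_1)_x \to (E_1)_x$ is invertible; and since $E_x = (E_0)_x \oplus (E_1)_x$ is $\omega_x$-orthogonal with $\omega_x$ non-degenerate, the restriction of $\omega_x$ to $(E_1)_x$ is non-degenerate. Composing these two facts shows that $v\mapsto \omega_x(v,\Phi_x(-))$ has trivial kernel, so $(Q_1)_x$ is non-degenerate.

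\emph{The factor $Q_0$.} Here I would compute a Gram matrix in the local coordinate $z$ and the frame of $E_0$ from the proof of Proposition~\ref{Qextends}: there $a_{2n}=z$, the restriction of $\Phi$ to $E_0$ has matrix $\left(\begin{smallmatrix}0 & \ell\\ -\ell\alpha & 0\end{smallmatrix}\right)$ with $\alpha(0)=0$, $\alpha\beta = a_{2n}$ where $\beta := a_{2n-2}$ has $\beta(0)\neq 0$, and $\omega(e_1,e_2) = \ell\,a_{2n-2}$ by~(\ref{QQ}). Since Lemma~\ref{extendphi} forces $i_x(L_x^{-n}) = \ker\Phi_x = \langle e_1\rangle$, the subsheaf $\mathcal{O}_U(V_0)\subset \mathcal{M}_U\big(L^{-1}E_0\oplus L^{-n}\big)$ from Proposition~\ref{sheaf} is generated over $\mathcal{O}_U$ by
\[
g_1 = (\ell^{-1}e_1,\,0),\qquad g_2 = (\ell^{-1}e_2,\,0),\qquad g_3 = \tfrac1z\big(-\ell^{-1}e_1,\ \ell^{-n}\big):
\]
$g_1,g_2$ are holomorphic, $g_3$ has a simple pole with residue spanning $\Gamma_x$, and the identity $z g_3 = -g_1 + (0,\ell^{-n})$ shows every admissible meromorphic section is an $\mathcal{O}_U$-combination of the three. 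Substituting these into $Q\big((v,w),(v',w')\big) = \tfrac12\big(\omega(v,\Phi v') + \omega(v',\Phi v)\big) + w w' a_{2n}$ yields the Gram matrix
\[
\begin{pmatrix} -z & 0 & 1 \\ 0 & -a_{2n-2} & 0 \\ 1 & 0 & 0 \end{pmatrix},
\]
whose determinant is $a_{2n-2}$. Evaluating at $x$ and invoking Assumption~\ref{assumption}, which gives $a_{2n-2}(x)\neq 0$, shows $(Q_0)_x$ is non-degenerate, completing the argument.

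The step I expect to require the most care is verifying that $g_1,g_2,g_3$ genuinely generate the Hecke-modified sheaf $\mathcal{O}_U(V_0)$ of Proposition~\ref{sheaf} — i.e.\ that adjoining the single pole section $g_3$ with residue in $\Gamma_x$ reproduces exactly the prescribed modification — together with bookkeeping the various $L$-power twists so that the entries of the Gram matrix come out as honest functions of $z$; the eigenvalue/eigenspace input behind $(Q_1)_x$ is routine.
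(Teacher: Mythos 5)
Your argument follows the paper's proof in all essentials: the same orthogonal splitting $Q|_U = Q_0 \oplus Q_1$, the same eigenvalue argument for $(Q_1)_x$, and a Gram-matrix computation for $(Q_0)_x$ in a frame of the Hecke-modified sheaf; your frame $(g_1,g_2,g_3)$ differs from the paper's $(f_1,f_2,f_3)$ only by the invertible change of basis $f_1=g_3$, $f_2=2g_1+zg_3$, $f_3=g_2$, and your generation argument for $g_1,g_2,g_3$ is fine. One slip to correct: you set $\beta:=a_{2n-2}$ and simultaneously use $\alpha\beta=a_{2n}$, but these are incompatible as identities in $z$ — the factorisation only gives $\alpha\beta=a_{2n}$ with $\beta(0)=a_{2n-2}(0)$, and in fact $a_{2n}-\alpha a_{2n-2}=-\alpha^2 c_{2n-4}$ where $c_{2n-4}$ is the $\lambda^2$-coefficient of $\hat p$. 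Consequently your Gram entries are not exact: $Q(g_1,g_1)=-\alpha a_{2n-2}$ (which is $-z$ only to first order), $Q(g_1,g_3)=\alpha a_{2n-2}/z$, and $Q(g_3,g_3)=-\alpha^2c_{2n-4}/z^2$, whose value at $x$ is $-\alpha'(0)^2c_{2n-4}(0)$ and need not vanish (already for $n=2$ it is nonzero). This does not damage the conclusion: at $z=0$ the matrix is $\bigl(\begin{smallmatrix}0&0&1\\0&-a_{2n-2}(0)&0\\1&0&t\end{smallmatrix}\bigr)$ for some $t$, with determinant $a_{2n-2}(0)\neq0$ independently of $t$, since the $(1,1)$ entry vanishes at $x$ while the $(1,3)$ entry equals $a_{2n}'(0)\neq 0$. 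So the proof goes through once the entries are stated correctly (the paper's own claim that the $f_1$-$f_1$ entry vanishes at $x$, via the assertion that $\alpha\Gamma$ vanishes to order three, suffers from the same inaccuracy and is equally harmless, as its determinant $a_{2n}'(0)^2a_{2n-2}(0)$ is likewise insensitive to that entry).
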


\begin{proof}
Fix $x \in D.$ By the foregoing discussion it suffices to show that both $(Q_0)_x$ and $(Q_1)_x$ are non-degenerate. To see that $(Q_1)_x$ is non-degenerate suppose $(Q_1)_x(u,v) = 0$ for every $u \in (V_1)_x.$ By the polarisation identity it follows that $\omega_x(u, \Phi_x v) = 0$ for every $u \in (V_1)_x.$ Since $\omega_x$ is non-degenerate it follows that $\Phi_x v = 0$, and since $(E_1)_x$ is the sum of the non-zero generalised eigenspaces, $v = 0$. Therefore, we are left to show that $(Q_0)_x$ is non-degenerate, which we will show using a local calculation. Set $e_3(z) := \ell^{-n}$, then observe that 
\[
f_1(z) := \left( -\frac{\ell^{-1}e_1(z)}{z}, \frac{e_3(z)}{z} \right ), \ \ \ f_2(z) := \left( \ell^{-1}e_1(z), e_3(z) \right ), \ \ \ f_3(z) := (\ell^{-1}e_2(z), 0)
\]
defines a frame for $V_0.$ Then, 
\begin{align*}
Q_0(f_1(z), f_1(z)) &= \frac{\ell^{-1}}{z^2}\omega(e_1(z), \Phi(z)\ell^{-1}e_1(z)) + \frac{a_{2n}(z)}{z^2} \\
&= \frac{1}{z^2}(a_{2n}(z) - \ell^{-1}\alpha(z)\omega(e_1(z), e_2(z))) \\
&=  \frac{1}{z^2}(a_{2n}(z) - \alpha(z)a_{2n-2}(z))
\end{align*}
where the last equality follows from $(\ref{QQ})$. Notice that $\alpha(z)a_{2n-2}(z) = a_{2n}(z) - \alpha(z)\Gamma(z)$ for some holomorphic function $\Gamma(z)$ where $\Gamma(0) = 0$. Since $ \alpha(z)\Gamma(z)$ vanishes at $z=0$ to at least order 3 it follows that $(Q_0)_x(f_1(0), f_1(0)) = 0.$ A similar calculation shows that $(Q_0)_x(f_2(0), f_2(0)) = 0.$ Next,
\begin{align*}
Q_0(f_3(z), f_3(z)) &= \omega(\ell^{-1}e_2(z), \Phi(z)\ell^{-1}e_2(z)) \\
&= -\ell^{-1}\omega(e_1(z), e_2(z)) \\
&= -a_{2n-2}(z)
\end{align*}
where the last equality again follows from (\ref{QQ}). Evaluating at $z =0$ gives $Q_x(f_3(0), f_3(0)) = -a_{2n-2}(0).$ By the polarisation identity it follows that
\begin{align*}
Q_0(f_1(z), f_2(z)) &= \frac{1}{z} ( a_{2n}(z) - \omega(e_1(z)\ell^{-1}, \Phi_1(z)e_1(z)\ell^{-1})) \\
&= \frac{1}{z} ( a_{2n}(z) + \ell^{-1} \alpha(z) \omega(e_1(z), e_2(z))) \\
&= \frac{1}{z} ( a_{2n}(z) + \alpha(z)a_{2n-2}(z)).
\end{align*}
Since $a_{2n}$ has a simple zero at $z=0$ and $\alpha$ has a zero of order 2 at $z = 0$ we obtain $(Q_0)_x(f_1(0), f_2(0)) = a_{2n}'(0).$ Similarly, by the polarisation identity it is straightforward to show that $(Q_0)_x(f_1, f_3) = 0$ and $(Q_0)_x(f_2, f_3) = 0.$ Therefore, in the given basis
\[
(Q_0)_x = \begin{bmatrix} 0 & a_{2n}'(0) & 0 \\
a_{2n}'(0) & 0 & 0 \\
0 & 0 & -a_{2n-2}(0)
\end{bmatrix}.
\]
The determinant of $(Q_0)_x$ is $a_{2n}'(0)^2a_{2n-2}(0),$ which is non-zero, and hence, $(Q_0)_x$ is non-degenerate. 
\end{proof}

This the desired global construction using Hecke modifications. We formally state the construction below.

\begin{construction}
Suppose $(E, \Phi, \omega)$ is a generic $\Sp_{2n}$-Higgs bundle with characteristic polynomial $p(\lambda)$, and set $D := (a_{2n}).$ Let $(V, \phi, Q)$ be the $\SO_{2n+1}$-Higgs bundle defined over $X - D$ obtained by applying Construction \ref{secondone}. To extend the bundle over $D$ we identify the sheaf $\mathcal{O}_X(V)$ with the subsheaf of $\mathcal{M}_X(L^{-1}E \oplus L^{-n})$ comprised of sections that are holomorphic away from $D$ with simple poles along $D$ whose residues are valued in $\Gamma_x = \{(-i_xw, w) \, \vert \, w \in L_x^{-n}\}$ for each $x \in D$ where $i_x : L_x^{-n} \to L_x^{-1}E_x$ is injective. Moreover, by choosing $i_x$ such that $i_x(L_x^{-n}) = \ker(\Phi_x)$ the Higgs field $\phi$ extends holomorphically over $D$. In fact, by choosing $i_x$ such that 
\[
\omega_x(i_x w, \Phi_x'(i_x w)) = -w^2a_{2n}'(0)
\] 
for every $x \in D$, the form $Q$ extends holomorphically over $D$ such that the extension is non-degenerate and $\phi$-compatible. Thus, $(V, \phi, Q)$ defines a $\SO_{2n+1}$-Higgs bundle over $X$ and the characteristic polynomial is given by $\lambda p(\lambda).$
\end{construction}

Finally, we determine exactly when different  choices of $\{i_x\}$ define isomorphic $\SO_{2n+1}$-Higgs bundles. 

\begin{proposition}
\label{degree}
Let $(E, \Phi, \omega)$ be a generic $\Sp_{2n}$-Higgs bundle where $\omega$ is $L$-valued. Suppose $(V, \phi, Q)$ and $(V', \phi', Q')$ are two $\SO_{2n+1}$-Higgs bundles associated to $(E, \Phi, \omega)$ defined by $\{i_x\}$ and $\{i_x'\}$ respectively. Then $(V, \phi, Q) \cong (V', \phi, Q')$ if and only if $i_x = i_x'$ for every $x \in D$ or $i_x = -i_x'$ for every $x \in D.$ 
\end{proposition}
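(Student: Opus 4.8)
The plan is to use that $(V,\phi,Q)$ and $(V',\phi',Q')$ \emph{agree over} $X_0=X-D$ by construction: both restrict there to the triple with underlying bundle $W:=L^{-1}E\oplus L^{-n}$, Higgs field $\phi(v,w)=(\Phi v,0)$ and form $Q((v,w),(v,w))=\omega(v,\Phi v)+w^2a_{2n}$, and by Proposition \ref{sheaf} both are realised as subsheaves of $\mathcal M_X(W)$ differing only in the prescribed residue subspaces $\Gamma_x=\{(-i_xw,w)\}$, resp.\ $\Gamma'_x=\{(-i'_xw,w)\}$, at the points of $D$. So any isomorphism $\psi$ restricts over $X_0$ to an automorphism of this common triple, and the proposition amounts to determining which such automorphisms carry the subsheaf $V\subset\mathcal M_X(W)$ onto $V'$. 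I would organise everything around the automorphism of $(E,\Phi,\omega)$ that $\psi$ induces downstairs.

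For the ``if'' direction: if $i_x=i'_x$ for all $x$, then $\Gamma_x=\Gamma'_x$, so $V=V'$ as subsheaves of $\mathcal M_X(W)$; since $\phi,Q$ (and $\phi',Q'$) are the unique holomorphic extensions over $D$ of the common data on $X_0$, the two triples coincide and $\id$ is an isomorphism. If $i_x=-i'_x$ for all $x$, take the global bundle automorphism $g:=(-\id_{L^{-1}E})\oplus\id_{L^{-n}}$ of $W$: one checks $(\id_L\otimes g)\circ\phi=\phi'\circ g$ on $X_0$ (hence on $X$), that $g$ carries $Q$ to $Q'$, and that $\det g=(-1)^{2n}=1$, while $g(\Gamma_x)=\{(i_xw,w)\}=\{(-i'_xw,w)\}=\Gamma'_x$; thus $g$ maps the subsheaf $V$ onto $V'$ and so defines an isomorphism $(V,\phi,Q)\cong(V',\phi',Q')$ of $\SO_{2n+1}$-Higgs bundles.

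For the ``only if'' direction, let $\psi$ be an isomorphism. Intertwining $\phi,\phi'$ forces $\psi(V_0)=V_0'$ for the kernel line bundles $V_0=\ker\phi$, $V_0'=\ker\phi'$; writing each of $V,V'$ as an extension $0\to L^{-n}\to V\to E\to 0$ as in the determinant lemma, $\psi$ descends to an automorphism $\bar\psi$ of $(E,\Phi,\omega)$ — it commutes with $\Phi$ because $\psi$ intertwines $\phi,\phi'$, and preserves $\omega$ (given by $\omega(v,w)=Q(\phi\tilde v,\tilde w)$) because $\psi$ preserves $Q$. Since $\Aut(E,\Phi)=\CC^{\ast}\cdot\id$ for $S$ smooth (the Schur-type argument of Section \ref{secsp2n}) and a scalar preserving $\omega$ has square $1$, we get $\bar\psi=\pm\id_E$. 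Suppose $\bar\psi=\id_E$. Over $X_0$, in block form for $W=L^{-1}E\oplus L^{-n}$ the upper-right block of $\psi$ vanishes (as $\psi(V_0)=V_0'$), the lower-left block $C$ satisfies $C\Phi=0$ by $\phi$-equivariance and hence vanishes because $\Phi$ is invertible on $X_0$, and the upper-left block equals $\bar\psi|_{X_0}=\id$; so $\psi|_{X_0}=\id_{L^{-1}E}\oplus D$ for a nowhere-vanishing $D$, $Q$-compatibility gives $D^2a_{2n}=a_{2n}$ hence $D\equiv\pm1$, and orientation-compatibility gives $\det\psi=1$, hence $D=1$. Thus $\psi|_{X_0}=\id_W$; as $V,V'$ are torsion-free subsheaves of $\mathcal M_X(W)$, for every local section $s$ of $V$ the meromorphic sections $\psi(s)$ and $s$ of $W$ agree off $D$ and hence everywhere, so $V'=\psi(V)=V$ and $i_x=i'_x$ for all $x\in D$.

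Finally, $\bar\psi=-\id_E$ reduces to the previous case: let $(V'',\phi'',Q'')$ be produced from $(E,\Phi,\omega)$ by the Hecke construction with data $\{-i'_x\}$; by the ``if'' argument the same $g$ gives an isomorphism $(V',\phi',Q')\cong(V'',\phi'',Q'')$ inducing $-\id_E$ downstairs, so $g\circ\psi$ induces $\id_E$ and the first case applied to $g\circ\psi$ gives $i_x=-i'_x$ for all $x\in D$. The step I expect to need the most care is the block-matrix bookkeeping in the case $\bar\psi=\id_E$: showing the off-diagonal blocks vanish over $X_0$ (which is where genericity enters, through invertibility of $\Phi$ away from $D$) and that the residual scalar $D$ on the kernel line bundle is pinned to $+1$ by the quadratic form together with the orientation. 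Once $\psi|_{X_0}=\id_W$ is known, the identity $V=V'$ is the routine fact that a torsion-free sheaf on a curve is recovered from its restriction to a dense open, and the $\bar\psi=-\id_E$ reduction is formal.
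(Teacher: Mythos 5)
Your argument is correct and follows essentially the same route as the paper: restrict to $X_0$, use the Schur-type rigidity of the quotient Higgs bundle (smooth spectral curve) to make $\psi$ block-diagonal with scalar blocks, extend across $D$ by continuity/torsion-freeness, and compare the residue subspaces $\Gamma_x$, $\Gamma'_x$, with the explicit automorphism $g$ handling the ``if'' direction. The only difference is bookkeeping: the paper never pins down the two scalars individually (it only needs their ratio $c_1/c_2=\pm 1$ against $i_x'=\pm i_x$), so it avoids your appeal to orientation-compatibility, which in any case only affects which sub-case yields which sign, not the stated dichotomy.
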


\begin{proof}
Let $\psi : (V, \phi, Q) \to (V', \phi', Q')$ be an isomorphism. Since $\psi$ commutes with the Higgs fields there is an induced isomorphism $\psi_2 : L^{-n} \to L^{-n}$. Hence, $\psi_2 = c_2 \id$ since $\psi_2$ is an automorphism of a holomorphic line bundle over a compact Riemann surface. Next, there is an induced automorphism on the quotient bundle $L^{-1}E$, i.e., $\psi_1 : L^{-1}E \to L^{-1}E$. Since $(L^{-1}E, \Phi)$ is a Higgs bundle with a smooth spectral curve the automorphisms are non-zero scalar multiples of the identity, i.e., $\psi_1 = c_1\id.$ Thus,
\[
\psi = \begin{bmatrix}
c_1 & * \\ 0 & c_2
\end{bmatrix}.
\] 
Away from $D$ both $V$ and $V'$ are isomorphic to $L^{-1}E \oplus L^{-n}$, so away from $D$ we have $* = 0.$ By identifying $\mathcal{O}_X(V)$ and $\mathcal{O}_X(V')$ as subsheaves of $\mathcal{M}_X(L^{-1}E \oplus L^{-n})$ it follows by continuity that $* = 0$ everywhere, and hence,
\[
\psi = \begin{bmatrix}
c_1 & 0 \\ 0 & c_2
\end{bmatrix}.
\]
Since $\psi$ is an isomorphism, $\psi$ preserves the space of residues for each $x \in D$. Let $x \in D$ be given and suppose $w \in L_x^{-n}.$ Then,
$
\psi_x((-i_x w, w)) = \left (-\frac{c_1}{c_2} i_x (c_2w), c_2 w \right )$. Notice that $i'_x = t_xi_x$ where $t_x = \pm1$ and it follows that $t_x = \frac{c_1}{c_2}$, which is constant and independent of $x \in D$. Therefore, either $c_1 = c_2$, which holds if and only if $i_x = i_x'$ for every $x \in D$, or $c_1 = -c_2$, which holds if and only if $i_x = -i_x'$ for every $x \in D.$ 

\end{proof}

\subsection{Spectral Data for $\SO_{2n+1}$-Higgs Bundles}
\label{SO_{2n+1} spec data}

Using the correspondence between generic $\SO_{2n+1}$-Higgs bundles and $\Sp_{2n}$-Higgs bundles we will now describe the spectral data for generic $\SO_{2n+1}$-Higgs bundles. We adopt the notation from the previous section, i.e., $(V, \phi, Q)$ is a $\SO_{2n+1}$-Higgs bundle with characteristic polynomial $\lambda p(\lambda)$, $(E, \phi, \omega)$ is a $\Sp_{2n}$-Higgs bundle with characteristic polynomial $p(\lambda)$ that defines a smooth spectral curve $S$, and $(V, \phi, Q)$ is determined by $(E, \Phi, \omega)$ and homomorphisms $\{i_x\}_{x \in D}.$ Since $\omega : E \otimes E \to L$ is $L$-valued the spectral data is slightly different to that found in Section \ref{secsp2n}. Namely, the isomorphisms $\theta$ will be between different line bundles. 

\bigskip

Suppose $E \cong \pi_*(N)$ where $N \in \Pic(S).$ The reader may easily verify using the same arguments from Section \ref{secsp2n} that the spectral data for $(E, \Phi, \omega)$ is $(N, \theta)$ where $\theta : \sigma^*(N) \to N^*(R) \pi^*(L)$ is an isomorphism, or, equivalently, $\theta \in \HH^0(S, \sigma^*(N^*)N^*(R)\pi^*(L))$ is nowhere vanishing. Our goal is to encode the data of the homomorphisms $\{i_x\}$ into $\theta.$ Identifying $D$ with $\lambda = 0$ in $S$ defines a divisor in $S$, which we denote by $D_S.$ Let $y \in D_S$ where $\pi(y) = x.$ Then, in a local coordinate chart $(U, z)$ centred at $x$ recall that we may locally factor the characteristic polynomial by \[p(\lambda, z) = (\lambda^2 + \alpha)\hat{p}(\lambda, z)\] where $\alpha(z) = -\lambda_1(z)^2$ and $\alpha(0) = 0$. The point $y$ belongs to the curve $\pi_0: S_0 \to U$ defined by $\lambda^2 +\alpha$. Let $F \subset E \vert_U$ denote the rank 2 subbundle induced by the kernel of the sheaf map $\Phi(z)^2 + \alpha(z)\ell^2$ where $\ell := \ell(z)$ is a trivialising section of $L$ over $U$. Then, $F \cong (\pi_0)_*(N_0)$ where $N_0 := N \vert_{S_0}$ and $\theta_y \in (N_0)^{-2}_yL_xR_y$. Therefore, to encode the data of $i_x$ into the section $\theta$ we have distilled the problem to studying the isomorphism in the $\Sp_2$-Higgs bundle $(F, \Phi, \omega)$ defined over $U$ with characteristic polynomial $p(\lambda, z) := \lambda^2 -z \ell^2$ where $F = \pi_*(N_0)$ and $\Phi$ and $\omega$ are restricted to $F$. For simplicity we relabel $N_0$ by $N$. The fibre of $F$ at $x$ is given by
\[
F_x \cong \mathcal{O}_U(F) \otimes \mathcal{O}_x/\mathfrak{m}_x.
\] 
where $\mathfrak{m}_x \subset \mathcal{O}_x$ is the maximal ideal of holomorphic functions vanishing at $x$. Letting $\lambda = w\ell$ where $w$ is a holomorphic coordinate centred at $y$ it follows that $z = w^2$, and hence, 
\[
F_x \cong \mathcal{O}(N) \otimes \mathcal{O}_y/\mathfrak{m}^2_y.
\]
Thus, $n, nw \mod(w^2)$ defines a basis for $F_x.$ Since $p(\Phi(z), z) = 0$ we see $\Phi(z)^2 = z\ell^2$, and so $\Phi(z) = w\ell$. It follows that $\Phi(z)n = nw\ell$ and $\Phi(z)nw = 0$, so we let $e_1 = nw$ and $e_2 = n.$ Now, $i_x = u\ell_x^{n-1} e_1$ and $j_x = u\ell_x^{n-2} e_2$ for some $u \ne 0,$ and
\[
\omega_x(i_x, j_x) = a_{2n-2}(x).
\] 
Then, it follows that in the basis
\begin{equation}
\label{conditionone}
u^2\omega_x(e_1, e_2) = a_{2n-2}(0)\ell_x.
\end{equation}
Since $\theta_y \in N_{y}^{-2}R_yL_x$ we may write $\theta_y = \frac{a n^{-2}\ell_x}{w}$ for some $a\ne 0.$  Then, since $\omega_x(e_1, e_2) = \Tr_{S/U}(e_1 \otimes \theta_y e_2)$ it follows that
\begin{equation}
\label{conditiontwo}
\omega_x(e_1, e_2) = 2 a \ell_x.
\end{equation}
Combining $(\ref{conditionone})$ and $(\ref{conditiontwo})$ gives $2au^2 = a_{2n-2}(0).$ Next, recall that by the adjunction formula $\mathcal{O}_S(R) \cong \pi^*(L^{2n-1})$, which is locally determined by 
\[
\frac{1}{w} \mapsto \frac{\partial_\lambda p(\lambda, z)}{w} \mod{w^2}.
\]
Then, since $\frac{\partial_\lambda p(\lambda, z)}{w} = 2a_{2n-2}(z)\ell^{2n-1} \mod{w^2}$ it follows that
\begin{equation}
\label{j}
\theta_y = 2aa_{2n-2}(0)n^{-2} \ell_x^{2n}.
\end{equation}
Finally, notice that $\theta_yj_x^2 \in L_x^{4n-4}$ where by (\ref{j})
\[
\theta_y j_x^2 = 2au^2a_{2n-2}(0) \ell_x^{4n-4}.
\]
However, $2au^2 = a_{2n-2}(0)$ and it follows that $\theta_y j_x^2 = a_{2n-2}(x)^2,$ i.e., 
\[
\theta_y = \left ( \frac{a_{2n-2}(x)}{j_x} \right )^2.
\]
Therefore, the family $\{j_x\}_{x \in D}$, which determines the family $\{i_x\}_{x \in D}$, defines a square root of the homomorphisms $\{\theta_y\}_{y \in D_S}$. 

\bigskip

Now, we may reformulate this data into the language of torsors of Prym varieties. In what proceeds, let $\bar{S} := S/\sigma$. For each $m \in \ZZ$ define the $\Prym(S, \bar{S})$-torsor
\[
\mathcal{T}_m := \left \{(U, [\chi]) \, : \, U \in \Pic(S), \, \chi \in \HH^0(S, \sigma^*(U^*)U^*\pi^*(L^m)) \, \text{nowhere vanishing} \right \}.
\]
Notice that $\mathcal{T}_{2n}$ characterises the spectral data for generic $\Sp_{2n}$-Higgs bundles in this case. Next, we define 
\[
\bar{\mathcal{T}}_{2n} = \mathcal{T}_{2n} \times \{\{j_x\}/_{\pm 1} \, : \, \Phi_xj_x = i_x, \, \omega_x(i_x, j_x) = a_{2n-2}(x)\},
\]
which characterises the spectral data for generic $\SO_{2n+1}$-Higgs bundles. The relationship between $\SO_{2n+1}$ and $\Sp_{2n}$-Higgs bundles is described by the projection map $\pr : \bar{\mathcal{T}}_{2n} \to \mathcal{T}_{2n}$, which has degree $2^{2n\deg(L) - 1}.$ To see this, recall that the zeros of $a_{2n}$ are simple, of which, there are $\deg(L^{2n}) = 2n\deg(L)$ total. Also, for each zero there are a choice of two vectors characterising the $\SO_{2n+1}$-Higgs bundle, and by Proposition \ref{degree} the choices are unique up to a total change of sign, i.e., $\{i_x\}$ and $\{-i_x\}$ define isomorphic Higgs bundles. We wish to compute the number of connected components of $\bar{\mathcal{T}}_{2n}$ and determine the underlying abelian variety. To do so, consider the squaring map $\bar{s} : \mathcal{T}_n \to \bar{\mathcal{T}}_{2n}$ defined by
\begin{equation}
\label{liftofsquaring}
\bar{s}(U, [\chi]) = (U^2, [\chi^2], [\chi \vert_{D_S} ]/_{\pm 1}).
\end{equation} 
The map defined in (\ref{liftofsquaring}) is the lift of the regular squaring map $s : \mathcal{T}_n \to \mathcal{T}_{2n}.$ It is rudimentary to verify that for $(U_1, \chi_1), (U_2, \chi_2) \in \mathcal{T}_n$ we have $U_1^2 \cong U_2^2$ if and only if $U_1 \cong U_2 \otimes A$ where $A \in \Jac(S)[2]$. Note that $\sigma^*(A) \cong A$ so there is a lift $\bar{\sigma} : A \to A$. After scaling if necessary we may assume without loss of generality that $\bar{\sigma}$ is involutive and $\chi_1 = \chi_2 \otimes \bar{\sigma}.$ Since $\chi_1 \vert_{D_S} = \chi_2 \vert_{D_S}$ it follows that $\bar{\sigma} \vert_{D_S} = 1$, so $B = A/\bar{\sigma}$ defines a holomorphic line bundle on $\bar{S}$ where $B^2 \cong \mathcal{O}_{\bar{S}}$. Since $A \cong p^*(B)$ it follows that the induced map 
\[
\bar{s} : \mathcal{T}_n / p^*\Jac(\bar{S})[2] \to \bar{\mathcal{T}}_{2n}
\]
defines an injection. By a careful application of the Riemann-Hurwitz formula the reader may verify that the induced squaring map
\[
s = \pr \circ \bar{s} : \mathcal{T}_n / p^*\Jac(\bar{S})[2] \to \mathcal{T}_{2n}
\]
has degree $2^{2n\deg(L)-2}.$ It follows that $\bar{\mathcal{T}}_{2n}$ has two connected components that are isomorphic as algebraic varieties to $\mathcal{T}_n / p^*\Jac(\bar{S})[2].$ However, $\mathcal{T}_n / p^*\Jac(\bar{S})[2]$ is a torsor of $\Prym(S, \bar{S}) / p^*\Jac(\bar{S})[2]$, which establishes the first result in Theorem \ref{main theorem}. However, note that $  \Prym(S, \bar{S}) / p^*\Jac(\bar{S})[2] \cong \Prym(S, \bar{S})^\vee.$

\begin{lemma}
There is an isomorphism $\Prym(S, \bar{S})^\vee \cong \Prym(S, \bar{S}) / p^*\Jac(\bar{S})[2]$. 
\end{lemma}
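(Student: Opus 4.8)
The plan is to exhibit the isomorphism as the polarisation on $P := \Prym(S,\bar S)$ induced from the canonical principal polarisation of $\Jac(S)$, and then to compute its kernel. Write $\iota : P \hookrightarrow \Jac(S)$ for the inclusion and $\lambda_S : \Jac(S) \xrightarrow{\sim} \Jac(S)^\vee$ for the canonical principal polarisation, and similarly $\lambda_{\bar S}$ for $\bar S$. First I would record two preliminaries. Since $p : S \to \bar S$ is a ramified double cover, $p^* : \Jac(\bar S) \to \Jac(S)$ is injective by the criterion recalled in Section \ref{prymvarieties}, so $\ker(\Nm_{S/\bar S})$ is connected and hence $P = \ker(\Nm_{S/\bar S})$. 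Also $\Nm_{S/\bar S}\circ p^* = [2]$ on $\Jac(\bar S)$ because $p$ has degree $2$, so $p^*\Jac(\bar S)[2] \subseteq \ker(\Nm_{S/\bar S}) = P$ and the quotient $P/p^*\Jac(\bar S)[2]$ is defined. Then I would introduce
\[
\psi := \iota^\vee \circ \lambda_S \circ \iota : P \longrightarrow P^\vee ,
\]
which is an isogeny: $\lambda_S$ is an isomorphism, $\iota$ is injective, and $\iota^\vee$ is surjective with finite kernel, so $\psi$ has finite kernel, and it is then automatically surjective as $\dim P^\vee = \dim P$.

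The main step is to show $\ker(\psi) = p^*\Jac(\bar S)[2]$. Dualising the exact sequence $0 \to P \xrightarrow{\iota} \Jac(S) \xrightarrow{\Nm_{S/\bar S}} \Jac(\bar S) \to 0$ gives $\ker(\iota^\vee) = \operatorname{im}\big(\Nm_{S/\bar S}^\vee : \Jac(\bar S)^\vee \to \Jac(S)^\vee\big)$. The key input, recalled in Section \ref{prymvarieties}, is that once $\Jac(S)$ and $\Jac(\bar S)$ are identified with their duals via $\lambda_S$ and $\lambda_{\bar S}$, the map $\Nm_{S/\bar S}^\vee$ becomes $p^*$. Hence $\lambda_S^{-1}\big(\ker(\iota^\vee)\big) = p^*\Jac(\bar S)$, and therefore
\[
\ker(\psi) = \iota^{-1}\Big(\lambda_S^{-1}\big(\ker(\iota^\vee)\big)\Big) = P \cap p^*\Jac(\bar S).
\]
To finish this step I would note that if $p^*y \in P = \ker(\Nm_{S/\bar S})$ then $2y = \Nm_{S/\bar S}(p^*y) = 0$, so $y \in \Jac(\bar S)[2]$; the reverse inclusion is immediate, and since $p^*$ is injective, $P \cap p^*\Jac(\bar S) = p^*\Jac(\bar S)[2]$.

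Putting these together, $\psi : P \to P^\vee$ is a surjective isogeny whose kernel is exactly $p^*\Jac(\bar S)[2]$, so it descends to the asserted isomorphism $P/p^*\Jac(\bar S)[2] \xrightarrow{\sim} P^\vee$. I expect the one delicate point to be the middle assertion that $\Nm_{S/\bar S}$ and $p^*$ are dual to one another under the theta polarisations of $\Jac(S)$ and $\Jac(\bar S)$; this is the functoriality of the autoduality of Jacobians under pullback and norm maps, which is precisely the fact quoted from \cite{MR2062673} in Section \ref{prymvarieties}. The remaining ingredients are the connectedness of $\ker(\Nm_{S/\bar S})$, where the ramification of $p$ enters, and routine manipulation of dual exact sequences and isogenies.
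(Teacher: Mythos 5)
Your proof is correct and follows essentially the same route as the paper's: both dualise the norm exact sequence, use that $\Nm_{S/\bar S}^\vee = p^*$ under the theta polarisations of the Jacobians, and identify $\Prym(S,\bar S)\cap p^*\Jac(\bar S)$ with $p^*\Jac(\bar S)[2]$ via $\Nm_{S/\bar S}\circ p^* = [2]$. The only difference is presentational: you make the polarisation $\lambda_S$ and the surjectivity of the restricted map explicit (via the isogeny dimension count), where the paper leaves these implicit.
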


\begin{proof}
Consider the short exact sequence
\[
0 \to \Prym(S, \bar{S}) \to \Jac(S) \xrightarrow{\Nm_{S/\bar{S}}} \Jac(\bar{S}) \to 0.
\]
Dualising the sequence gives
\[
0 \to \Jac(\bar{S}) \xrightarrow{p^*} \Jac(S) \to \Prym(S, \bar{S})^\vee \to 0
\]
and hence, $\Jac(S) / p^* \Jac(\bar{S}) \cong \Prym(S, \bar{S})^\vee.$ Restricting to $\Prym(S, \bar{S})$ defines an isomorphism $\Prym(S, \bar{S})^\vee \cong \Prym(S, \bar{S})/(p^*\Jac(\bar{S}) \cap \Prym(S, \bar{S}))$. To describe $p^*\Jac(\bar{S}) \cap \Prym(S, \bar{S})$ let $b \in \Jac(\bar{S})$. Then $p^{-1}(b) = a +\sigma(a)$ where $p(a) = b$, and we see that $p(p^{-1}(b)) = 2b$. Thus, $p^{-1}(b) \in p^*\Jac(\bar{S}) \cap \Prym(S, \bar{S})$ if and only if $b \in \Jac(\bar{S})[2]$ and the result follows. 
\end{proof}

The duality in the Prym varieties between the spectral data for generic $\Sp_{2n}$-Higgs bundles and $\SO_{2n+1}$-Higgs bundles is precisely Langlands duality in this setting. 

\section{$\SO_{2n}$-Higgs Bundles}\label{sec5}

Let $(E, \Phi, Q)$ be a $\SO_{2n}$-Higgs bundle. Similar to $\Sp_{2n}$-Higgs bundles, the characteristic polynomial is even. However, contrary to the $\Sp_{2n}$ case if $A \in \mathfrak{so}_{2n}$ has characteristic polynomial $p(x) = x^{2n} + a_2x^{2n-2} + \cdots + a_{2n}$, then the characteristic coefficients $a_2, \ldots, a_{2n}$ do not form an invariant homogenous basis for $\mathfrak{so}_{2n}.$ The polynomial $a_{2n}$ is the square of a polynomial $p_n$ called the {\em Pfaffian}, and the polynomials $a_2, \ldots, a_{2n-2}, p_n$ form a basis. Hence, the characteristic polynomial of $(E, \Phi, Q)$ is of the form
\begin{equation}
\label{evensopoly}
p(\lambda) = \lambda^{2n} + a_2\lambda^{2n-2} + \cdots + a_{2n-2}\lambda^2 + p_n^2
\end{equation}
where $a_i \in \HH^0(X, L^{2i})$ and $p_n \in \HH^0(X, L^n).$ The spectral curve $\pi : S \to X$ defined by $(\ref{evensopoly})$ has the canonical involution $\sigma(\lambda) = -\lambda.$ Now, due to the presence of the Pfaffian, the base locus of the linear system obtained by allowing the $a_{2i}$ to vary in (\ref{evensopoly}) is not empty. Indeed, the base locus is precisely the fixed points of $\sigma.$ By Bertini's theorem a generic spectral curve is smooth away from the base locus. We will now assume that the sections $a_{2n-2}$ and $p_n$ have no common zeros, which is valid since $L$ is basepoint-free. Then, under this assumption it can be easily verified in local coordinates that the fixed points of $\sigma(\lambda) = -\lambda$ define ordinary double points of the spectral curve. Further, if $n>1$ a counting argument shows that a generic spectral curve $S$ is an irreducible scheme.

\subsection{$\SO_{2n}$ BNR-correspondence}
\label{SO_{2n} BNR-correspondence}

Suppose now that $(E, \Phi, Q)$ is a generic $\SO_{2n}$-Higgs bundle with generic spectral curve $\pi : S \to X$. Since the spectral curve is not smooth we cannot apply the BNR correspondence. To overcome this, consider the canonical complex analytic normalisation $\nu : \hat{S} \to S$. Let $\hat{\pi} = \pi \circ \nu.$ Although the normalisation $\hat{\pi} : \hat{S} \to X$ is not necessarily a spectral curve, we will adapt the BNR correspondence to $\hat{S}$. Note that away from the singularities $\nu : \hat{S} \to S$ defines a biholomorphism.  Since $\hat{S}$ is a compact Riemann surface, consider the holomorphic line bundle $A$ induced by the kernel of the sheaf map $\nu^*(\lambda) - \hat{\pi}^*(\Phi).$ Let $R$ denote the ramification divisor of $\hat{\pi}.$ Then, the line bundle $A(R) \to \hat{S}$ is analogous to the line bundle from the BNR correspondence. We claim that $\hat{\pi}_*(A(R)) \cong E$ with isomorphism 
\begin{equation}
\label{tracemapso}
\Tr_{\hat{S}/X} : \hat{\pi}_*(\mathcal{O}_S(A(R))) \to \mathcal{O}_X(E).
\end{equation} 
Since away from the singularities $\hat{S}$ and $S$ are biholomorphic, the trace map defines an isomorphism as seen in the smooth case. Therefore, we only need to verify that the trace map defines an isomorphism near the singularities of $S$, i.e., near the zeros of $p_n.$ To do so, we will show that it is enough to verify the isomorphism in the $\SO_2$ case. In a sufficiently small neighbourhood $U$ of $x$ the spectral curve is the disjoint union of two irreducible spectral curves, i.e., $S\vert_{\pi^{-1}(U)} \, = S_1 \cup S_2,$ where $S_1$ is a degree $2$ curve containing the singularity and $S_2$ is a smooth degree $2n-2$ curve. Suppose $p_1(\lambda)$ and $p_2(\lambda)$ are the polynomials defining $S_1$ and $S_2$ respectively, then $p(\lambda) = p_1(\lambda)p_2(\lambda).$ From this factorisation, the Higgs bundle has a decomposition 
\begin{equation}
\label{decomp}
(E, \Phi) \vert_U \, = (E_1, \Phi_1) \oplus (E_2, \Phi_2)
\end{equation}
where the characteristic polynomial of $(E_i, \Phi_i)$ is given by $p_i(\lambda)$ for $i=1,2.$ To see this, let $E_1$ and $E_2$ are the subbundles of $E$ induced be the kernel of the sheaf maps $p_1(\Phi)$ and $p_2(\Phi)$ respectively. Then, by definition $E_1$ and $E_2$ have characteristic polynomials $p_1(\lambda)$ and $p_2(\lambda)$ respectively. Moreover, $E_1$ and $E_2$ are $\Phi$-invariant, and hence, $\Phi_i := \Phi \vert_{E_i}$ define Higgs fields for $i=1,2$. To obtain the decomposition in (\ref{decomp}) we left to show $E \vert_U \cong E_1 \oplus E_2.$ For $y \ne x$, it is clear that $E_y \cong (E_1)_y \oplus (E_2)_y$, and hence, we only need to show $E_x \cong (E_1)_x \oplus (E_2)_x.$ Since the eigenvalues of $\Phi$ are generically distinct, $\rank(E_i) = \rank(p_i)$ for $i = 1,2$, and thus, $\dim(E_x) = \dim((E_1)_x) + \dim((E_2)_x).$ To see that $(E_1)_x \cap (E_2)_x = \{0\}$, let $z$ be local coordinates on $U$ centred at $x$. Then, $(E_i)_x$ is contained in the generalised eigenspaces of $\Phi_x$ corresponding to the zeros of $p_i(\lambda, 0)$ for $i=1,2$. However, since $S_1$ and $S_2$ are disjoint, $p_1(\lambda, 0)$ and $p_2(\lambda, 0)$ have distinct zeros, and thus, $(E_1)_x \cap (E_2)_x = \{0\},$ which verifies the decomposition in (\ref{decomp}). In fact, the decomposition is orthogonal with respect to $Q$. Indeed, for $y \ne x$ the eigenvalues of $\Phi_y$ are distinct and it is clear that $(E_1)_y$ and $(E_2)_y$ are orthogonal with respect to $Q_y$. By continuity, $(E_1)_x$ and $(E_2)_x$ are orthogonal with respect to $Q_x$, which verifies the claim.

\bigskip

Since the normalisation is local and $S_2$ is smooth, the normalisation of $S$ over $\pi^{-1}(U)$ is given by $\hat{S_1} \cup S_2$, where $\nu_1 : \hat{S_1} \to S_1$ is the normalisation of $S_1.$ Let $N_1$ and $N_2$ denote the restriction of $A(R)$ to $\hat{S_1}$ and $S_2$ respectively. Let $\hat{\pi_1} := \hat{\pi} \vert_{\hat{S_1}}$, and notice that $\hat{\pi} \vert_{S_2} = \pi \vert_{S_2}$, which we denote by $\pi_2.$ Then, over $U$ there is a canonical isomorphism $\hat{\pi}_*(A(R)) \cong (\hat{\pi_1})_*(N_1) \oplus (\pi_2)_*(N_2).$ Over $U$, the trace map in (\ref{tracemapso}) decomposes as a sum $\Tr_{S\vert_{\pi^{-1}(U)/U}} = \Tr_{\hat{S_1}/U} \oplus \Tr_{S_2/U}.$ Therefore, it suffices to show that both $\Tr_{\hat{S_1}/U}$ and $\Tr_{S_2/U}$ are isomorphisms. However, $\Tr_{S_2/U}$ is an isomorphism from the BNR correspondence, and thus, we have reduced to verifying the $\SO_2$ case. Assume now that $(E, \Phi, Q)$ is a generic $\SO_2$-Higgs bundle with characteristic polynomial $p(\lambda) = \lambda^2 + q^2$ where $q \in \HH^0(X, L)$ has simple zeros. Let $\mu = iq$ so that $p(\lambda) = (\lambda -\mu)(\lambda+\mu),$ and suppose $x \in X$ is a zero of $q.$ Thus, $\lambda = \mu$ or $\lambda = -\mu$, which corresponds to the two sheets in $\hat{S}.$ It follows that $\Phi_x$ has eigenvalue $0$ of multiplicity 2, and hence, $\Phi_x$ is nilpotent. However, the only nilpotent element of $\mathfrak{so}_2$ is 0, and thus, $\Phi_x = 0.$ Therefore, locally $\Phi = \mu\beta$ where $\beta$ has eigenvalues $1,-1$. In an appropriate local frame, $\beta = \operatorname{diag}(1,-1)$, and $\Phi = \operatorname{diag}(\mu, -\mu).$  We will show in this frame that $\Tr_{S/X}$ is an isomorphism locally about $x.$ Note that locally $\hat{\pi}$ is unramified, so $\mathcal{O}(R) \cong\mathcal{O}.$ Let $V_1$ and $V_2$ denote the two sheets of the normalisation such that $\lambda \vert_{V_1} = \mu \id$ and $\lambda \vert_{V_2} = -\mu \id.$  Hence, $(\lambda - \Phi) \vert_{V_1} = \diag(0, 2\mu)$, and $(\lambda - \Phi) \vert_{V_2} = \diag(-2\mu, 0).$ Thus, in the local frame $A \vert_{V_1} \cong \langle (1,0) \rangle$ and $A \vert_{V_2} \cong \langle (0,1) \rangle.$ In the local frame a simple calculation shows that the trace map in $(\ref{tracemapso})$ is locally given by
\[
(f,g) = \begin{cases} (f,0) & \text{on $V_1$} \\ (0,g) & \text{on $V_2$} \end{cases} \mapsto (f,0) + (0,g) = (f,g),
\]
which is an isomorphism. Therefore, the trace map in (\ref{tracemapso}) is an isomorphism. 

\bigskip

The same argument as in the outline of the proof of the BNR correspondence in Section \ref{secBNR} shows that if $N \in \Pic(\hat{S})$ then $(\hat{\pi}_*(N), \hat{\pi}_*(\nu^*\lambda))$ defines a Higgs bundle on $X$ whose spectral curve is $S$; and that the assignments are mutual inverses. That is, Higgs bundles on $X$ with spectral curve $S$ naturally correspond to holomorphic line bundles on $\hat{S}$. We will assume $n>1$ so that $S$ is irreducible, and consequently, automorphisms of Higgs bundles with spectral curve $S$ are scalar multiples of the identity. Now, we will show that given $(E, \Phi, Q)$ with spectral curve $S$ and corresponding line bundle $N \in \Pic(\tilde{S})$, the orthogonal form $Q$ induces an isomorphism $\theta : \hat{\sigma}(N) \to N^*(R)$. Then, we will reconstruct a $\Phi$-compatible orthogonal form from the isomorphism $\theta$, both of which are unique up to scale by our assumption. 

\bigskip

Similar to Lemma \ref{involutionpull} the Higgs bundle $(E, -\Phi)$ corresponds to the line bundle $\hat{\sigma}^*(N) \in \Pic(\hat{S}).$ Also, similar to Lemma \ref{dualhiggs} it follows from relative duality that the dual Higgs bundle $(E^*, \Phi^t)$ with respect to $Q$ corresponds to the line bundle $N^*(R) \in \Pic(\hat{S}).$ Then, the orthogonal form $Q : E \otimes E \to \mathcal{O}_X$ defines an isomorphism $(E, -\Phi) \cong (E^*, \Phi^t).$ Hence, $Q$ induces an isomorphism $\theta : \hat{\sigma}^*(N) \to N^*(R),$ or, equivalently, a nowhere vanishing section of $\hat{\sigma}^*(N^*)N^*(R).$ Now, to construct a $\Phi$-compatible orthogonal form let $U \subseteq X$ be a given open set with $a, b \in \mathcal{O}_{\hat{S}}(N)(\hat{\pi}^{-1}(U))$, and consider the $\mathcal{O}_X$-bilinear form $\mu : E \otimes E \to \mathcal{O}_X$ defined by
\begin{equation}
\label{orthogform}
\mu(a,b) := \Tr_{\hat{S}/X}(a \otimes \theta \hat{\sigma}^*(b)).
\end{equation}

Since $\theta$ is nowhere vanishing and the form in (\ref{orthogform}) is the pairing from relative duality it follows that $\mu$ is non-degenerate. From $\Phi = \hat{\pi}_*(\nu^*\lambda)$ the reader may easily verify that $\mu$ is $\Phi$-compatible, and thus, we are left to verify that $\mu$ is symmetric. Similar to the $\Sp_{2n}$ case $\Tr_{\hat{S}/X} = \Tr_{\hat{S}/X} \circ \,\hat{\sigma}$ and there is a canonical lift $\bar{\sigma}$ of $\hat{\sigma}$ to $\hat{\sigma}^*(N^*)N^*(R).$ Hence, consider
\begin{equation}
\label{symmetric}
\mu(b,a) = \Tr_{\hat{S}/X}\hat{\sigma}(b \otimes \theta \hat{\sigma}^*a) = \Tr_{\hat{S}/X}(\hat{\sigma}^*b \otimes \bar{\sigma}^*\theta a) = \Tr_{\hat{S}/X}(a \otimes \bar{\sigma}^*\theta\hat{\sigma}^*b).
\end{equation}
From (\ref{symmetric}), $\mu$ is symmetric if and only if $\bar{\sigma}^* \theta = \theta.$ Contrary to the analogous condition in the $\Sp_{2n}$ case this condition is not immediate. However, there is a necessary and sufficient condition for $\bar{\sigma}^*\theta = \theta,$ which we will establish. Let $C = \hat{S} / \hat{\sigma}$ with induced map $q : C \to X$ such that $\hat{\pi} = q \circ \pr$ where $\pr : \hat{S} \to C$ denotes the natural projection map. Notice that since $\hat{\sigma}$ has no fixed points $\pr: S \to C$ is an unramified double cover, and thus, $\pr^* : \Jac(C) \to \Jac(\hat{S})$ is not injective. Now, $N\sigma^*(N) \cong \pr^*\Nm_{\hat{S}/C}(N)$, and since $\pr$ is unramified, $\mathcal{O}_{\hat{S}}(R) \cong \pr^*\mathcal{O}_{C}(R_q)$ where $R_q$ is the ramification divisor of $q$. It follows that we may view $\theta$ as an isomorphism 
\begin{equation}
\label{altiso}
\theta : \pr^*\Nm_{\hat{S}/C}(N) \to \pr^*\mathcal{O}_C(R_q).
\end{equation}
Now, note that although $\pr^*$ is not injective, (\ref{altiso}) descends to an isomorphism $\Nm_{\hat{S}/C}(N) \cong \mathcal{O}_C(R_q)$ if and only if $\bar{\sigma}^*\theta = \theta$. Therefore, $(E, \Phi, \mu)$ defines an $\mathrm{O}_{2n}$-Higgs bundle if and only if $\theta$ descends to an isomorphism $\Nm_{\hat{S}/C}(N) \cong \mathcal{O}_C(R_q).$ Suppose that $(E, \Phi, \mu)$ defines an $\mathrm{O}_{2n}$-Higgs bundle. It is not immediately clear that $E = \hat{\pi}_*(N)$ has trivial determinant. However, this turns out to be the case, which implies that, along with a choice of orientation, $(E, \Phi, \mu)$ defines a $\SO_{2n}$-Higgs bundle, which establishes the desired correspondence. Although there is a choice of two orientations, one preserves the Pfaffian and the other orientation reverses the Pfaffian, and thus, we choose the orientation preserving the Pfaffian. Hence, we are left to show that $\det(E) \cong \mathcal{O}_X.$ 

\subsection{$\det(E) \cong \mathcal{O}_X$}
\label{det E is trivial}

By the determinant of the pushforward formula, there is a canonical isomorphism 
\begin{equation}
\label{Eformula}
\det(E) \cong \Nm_{\hat{S}/X}(N) \otimes \det (\hat{\pi}_* \mathcal{O}_{\hat{S}}).
\end{equation}
 Since $\Nm_{\hat{S}/X} = \Nm_{C/X} \circ \Nm_{{\hat{S}/C}},$ and $\Nm_{\hat{S}/C}(N) \cong \mathcal{O}_C(R_q)$, it follows that \[\Nm_{\hat{S}/X}(N) \cong \Nm_{C/X}(\mathcal{O}_C(R_q)).\] Again applying the determinant of the pushforward formula we obtain \[\Nm_{C/X}(\mathcal{O}_C(R_q)) \cong \det(q_*\mathcal{O}_C(R_q)) \otimes (\det q_*\mathcal{O}_C)^*.\] Then, by relative duality $\det(q_*\mathcal{O}_C(R_q)) \cong (\det q_*\mathcal{O}_C)^*$, and from (\ref{Eformula}) we obtain
\begin{equation}
\label{detE}
\det(E) \cong \det( \hat{\pi}_* \mathcal{O}_{\hat{S}}) \otimes (\det q_*\mathcal{O}_C)^{-2}.
\end{equation}
From $(\ref{detE})$ it suffices to compute $\det( \hat{\pi}_* \mathcal{O}_{\hat{S}})$ and $\det q_*\mathcal{O}_C$. For $\det q^* \mathcal{O}_C$ a modification of the proof to Proposition \ref{structuresheaf} shows that $q_* \mathcal{O}_C \cong \mathcal{O}_C \oplus L^{-2} \oplus \cdots \oplus L^{-(2n-2)},$ and thus,
\begin{equation}
\label{firstiso}
\det q_* \mathcal{O}_C \cong L^{n-n^2}
\end{equation}
To compute $\det(\hat{\pi}_* \mathcal{O}_{\hat{S}})$ consider the following short exact sequence of sheaves
\begin{equation}
\label{normsheaf}
0 \to \mathcal{O}_S \to \nu_*\mathcal{O}_{\hat{S}} \to \bigoplus_{\sigma(p) = p} \CC_p \to 0
\end{equation} 
where $\CC_p$ denotes the skyscraper sheaf at $p.$ Applying $\pi_*$ to (\ref{normsheaf}) gives the long exact sequence
\begin{equation}
\label{les}
0 \to \pi_*\mathcal{O}_S \to \hat{\pi}_* \mathcal{O}_{\hat{S}} \to \bigoplus_{p_n(x) = 0} \CC_x \to R^1 \pi_*\mathcal{O}_S \to \cdots
\end{equation}
Note that $\pi^{-1}(y)$ is finite for each $y \in X$, and hence, $\HH^1(\pi^{-1}\{y\}, \mathcal{O}_S) = 0$. Thus, by Grauert's base change theorem, $R^1 \pi_*\mathcal{O}_S = 0$, and (\ref{les}) reduces to the short exact sequence
\begin{equation}
\label{secondsheafseq}
0 \to \pi_*\mathcal{O}_S \to \hat{\pi}_* \mathcal{O}_{\hat{S}} \to \bigoplus_{p_n(x) = 0} \CC_x \to 0.
\end{equation}
Let $j : \pi_*\mathcal{O}_S \to \hat{\pi}_* \mathcal{O}_{\hat{S}}$ denote the sheaf map in (\ref{secondsheafseq}) and suppose $x \in X$ is a zero of $p_n.$ Choose a local coordinate $z$ centred at $x$ such that $p_n(z) = z$. Since $(\pi_*\mathcal{O}_S)_x$ and $(\hat{\pi}_* \mathcal{O}_{\hat{S}})_x$ are rank $2n$ free $\mathcal{O}_x$-modules, where $\mathcal{O}_x$ is a principal ideal domain, we may choose a basis for $\mathcal{O}_x$ such that $\det(j)(z) = \operatorname{diag}(z^{e_1}, \ldots, z^{e_{2n}})$ is in smith normal form. However, from (\ref{secondsheafseq}) it follows that $j(z) = \diag(1, \ldots, 1, z)$ and $\det(j)(z) = z.$ Therefore, $\det j : \det(\pi_* \mathcal{O}_S) \to \det(\hat{\pi}_* \mathcal{O}_{\hat{S}})$ has a simple zero for each zero of $p_n \in \HH^0(X, L^n),$ and hence, $
\det(\hat{\pi}_* \mathcal{O}_{\hat{S}}) \cong L^n \otimes  \det(\pi_* \mathcal{O}_S).
$
From Proposition \ref{structuresheaf} and the foregoing discussion it follows that 
\begin{equation}
\label{detformulanorm}
\det(\hat{\pi}_* \mathcal{O}_{\hat{S}}) \cong L^{2n-2n^2}.
\end{equation}
Combining (\ref{detE}), (\ref{firstiso}), and (\ref{detformulanorm}) it follows that $\det(E) \cong \mathcal{O}_X,$ which completes the correspondence. That is, isomorphism classes of $\SO_{2n}$-Higgs bundles with generic spectral curve $S$ are in one-to-one correspondence with holomorphic line bundles $N \in \Pic(\hat{S})$ such that $\Nm_{\hat{S}/C}(N) \cong \mathcal{O}_C(R_q).$ Of course, choosing a bundle $M \in \Pic(\hat{S})$ such that $\Nm_{\hat{S}/C}(M) = \mathcal{O}_C(R_q)$ gives rise to an isomorphism $N \cong U \otimes M$ where $\Nm_{\hat{S}/C}(U) \cong \mathcal{O}_C.$ Thus, $\SO_{2n}$-Higgs bundles are in one-to-one correspondence with a torsor of $\ker(\Nm_{\hat{S}/C})$.

\bigskip

Since the degree 2 map $\pr : \hat{S} \to C$ is unramified, $\ker(\Nm_{\hat{S}/C})$ is not connected. Each connected component is a torsor of $\Prym(\hat{S}, C)$. The number of connected components of $\ker(\Nm_{\hat{S}/C})$ is equal to the index of $\pr_*\pi_1(\Jac(\hat{S}))$ in $\pi_1(\Jac(C)).$ By identifying the Jacobian varieties with their associated Albanese varieties there are canonical isomorphisms $\pi_1(\Jac(\hat{S})) \cong \HH_1(\hat{S}, \ZZ)$ and $\pi_1(\Jac(C)) \cong \HH_1(C, \ZZ)$. Now, the reader may verify from Hurewicz theorem that $(\HH^1(C, \ZZ) : \pr_*\HH^1(\hat{S}, \ZZ)) = (\pi_1(C): \pr_* \pi_1(\hat{S}))$. Finally, the covering map $\pr : \hat{S} \to C$ is determined by the kernel of a homomorphism $\pi_1(C) \to \ZZ_2$, and it follows that $(\HH^1(C, \ZZ) : \pr_*\HH^1(\hat{S}, \ZZ)) = 2$. Therefore, $\ker(\Nm_{\hat{S}/C})$ is comprised of two connected components, namely, $\Prym(\hat{S}, C)$ and a torsor of $\Prym(\hat{S}, C).$ In summary, isomorphism classes of $\SO_{2n}$-Higgs bundles with generic spectral curve $S$ consists of two connected components each of which are torsors of the Prym variety $\Prym(\hat{S}, C),$ which establishes the last result in Theorem \ref{main theorem}. Note, since $\pr : \hat{S} \to C$ is a unramified double cover the Prym variety is self-dual, which is exactly Langlands duality in this setting.

\bibliographystyle{plain}
\bibliography{sn-bibliography}

\end{document}